\theoremstyle{plain}
\newtheorem{theorem}{\bf Theorem}[section]
\newtheorem{lemma}[theorem]{\bf Lemma}
\newtheorem{proposition}[theorem]{\bf Proposition}
\newtheorem{corollary}[theorem]{\bf Corollary}
\theoremstyle{definition}
\newtheorem{definition}[theorem]{\bf Definition}
\newtheorem{example}[theorem]{\bf Example}
\newtheorem{remark}[theorem]{\bf Remark}
\newcommand{\disp}{\displaystyle}
\newcommand{\eqa}[1]{
\begin{align*}
#1
\end{align*}}
\newcommand{\R}{\mathbb{R}}
\newcommand{\N}{\mathbb{N}}
\title{Common transversals for coset spaces of compact groups}
\author{Hiroshi Ando}
\address{Hiroshi Ando, Department of Mathematics and Informatics, Chiba University, 1-33 Yayoi-cho, Inage, Chiba, 263- 8522,
Japan}
\email{hiroando@math.s.chiba-u.ac.jp}
\author{Andreas Thom}
\address{Andreas Thom, Fakult\"at f\"ur Mathematik, TU Dresden, 01062 Dresden, Germany}
\email{andreas.thom@tu-dresden.de}
\begin{document}

\maketitle
\bibliographystyle{siam}

\begin{abstract}
Let $G$ be a Polish group and let $H \leq G$ be a compact subgroup. We prove that there exists a Borel set $T \subset G$ which is simultaneously a complete set of coset representatives of left and right cosets, provided that a certain index condition is satisfied. Moreover, we prove that this index condition holds provided that $G$ is locally compact and $G/G^\circ$ is compact or $H$ is a compact Lie group. This generalizes a result which is known for discrete groups under various finiteness assumptions, but is known to fail for general inclusions of infinite groups. As an application, we prove that Bohr closed subgroups of countable, discrete groups admit common transversals.
\end{abstract}

\tableofcontents

\section{Introduction}

Let $G$ be a group and $H$ be a subgroup. A set $T \subset G$ is said to be a left transversal for $H$ or a complete set of left coset representatives if
$G = \sqcup_{t\in T} tH$. Similarly $S \subset G$ is said to be a right transversal or a complete set of right coset representatives if $G= \sqcup_{s \in S} Hs$. It is well-known that left transversals need not be right transversals unless $H$ is a normal subgroup. Note however, that if $T$ is a left transveral, then $T^{-1}$ is a right transveral and vice versa. It has been known for a long time that there exists subsets of $G$ which are simultaneously left and right transversals under various finiteness assumptions, for example when $H$ is finite or $[G:H]$ is finite. We call a set which is simultaneously a left and right transversal a common transversal for $H$.

The problem of finding common transversals in groups has a long history dating back more than a hundred years, see \cite{MR3295666} for more information. However, very little has been known regarding the regularity properties of common transversals for topological groups. One exception is the study by Appelgate--Onishi \cite{MR0442099AppelgateOnishi1977}, where it is shown that a closed common transversal exists for profinite groups.

Various classical matrix decomposition theorems can be interpreted as results about the existence of a particularly nice common transversal of a compact subgroup. Indeed, for example every matrix $g \in {\rm GL}(n,\mathbb C)$ can be written uniquely as a product of a unitary matrix $u \in {\rm U}(n)$ and a positive definite matrix $p \in {\rm P}(n,\mathbb C)$, i.e., $g=up$. Similarly, taking inverses, we obtain another decomposition $g=p'u'$. Thus, the set of positive definite matrices is a common transversal of ${\rm U}(n)$ in ${\rm GL}(n,\mathbb C).$
The Iwasawa decomposition (or QR-decomposition in the case of ${\rm GL}(n,\mathbb C)$) provides a similar common transversal, provided by the set of upper triangular matrices whose diagonal entries are positive. 
These decompositions are particularly nice in the sense that the common transversal is closed and satisfies $T^{-1}=T.$ This will not necessarily be true for the common transversals that we construct in this paper.

We prove a corresponding result for $G$ Polish and $H$ a compact subgroup, where the natural requirement on the common transversal also includes a certain regularity. In fact, we will prove that we can take $T$ always to be a Borel subset of $G$. We say that a closed subgroup $H$ of a topological group $G$ satisfies the {\it index condition}, if 
$$[H\colon H\cap xHx^{-1}]=[H\colon H\cap x^{-1}Hx], \quad \forall x \in G.$$ 

It was shown by Ore \cite[Theorem 2.1]{MR0100639Ore1958} that a subgroup $H\le G$ admits a common (set theoretic) transversal if and only if the index condition is satisfied. 
% See \cite{MR3295666} and the references therein for more information on the history of this problem.
The index condition (in case of finite indices) comes up naturally as a criterion for relative unimodularity of Hecke pairs $(G,H)$ or unimodularity of the corresponding Schlichting completion, see \cite{MR2465930, MR1027069} for more details.

The main results of this paper are the following theorems.

\begin{theorem}\label{thm main}
Let $G$ be a Polish group and $H$ be a compact subgroup. Assume that the index condition is satisfied for $H\le G$. Then there exists a Borel subset of $G$, which is a common transversal for cosets of $H$. 
\end{theorem}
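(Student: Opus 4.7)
The plan is to reduce the global problem to a double-coset-by-double-coset construction, carried out in a Borel-measurable manner. First, since $H$ is compact, the action of $H\times H$ on $G$ by $(h_1,h_2)\cdot g = h_1 g h_2^{-1}$ is a proper continuous action of a compact Polish group on a Polish space, so standard results on selectors for proper compact group actions yield a Borel transversal $X\subseteq G$ for the double cosets, together with a Borel reduction $r\colon G\to X$ satisfying $r(g)\in X\cap HgH$.

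Next, fix $x\in X$ and set $L_x = H\cap xHx^{-1}$, $R_x = H\cap x^{-1}Hx = x^{-1}L_x x$. A direct computation shows that the maps $hL_x\mapsto hxH$ and $R_xh\mapsto Hxh$ give bijections $H/L_x \cong HxH/H$ and $R_x\backslash H \cong H\backslash HxH$, and moreover that each intersection of a left with a right coset inside $HxH$ is nonempty, in fact equal to $h_1 L_x x h_2$. So within $HxH$ the combinatorics are those of a \emph{complete} bipartite structure with vertex classes $H/L_x$ and $R_x\backslash H$, of common cardinality $[H:L_x]=[H:R_x]$ by the index condition. A common transversal for $HxH$ is then precisely the data of a bijection $\sigma_x\colon H/L_x \to R_x\backslash H$ together with a choice of one element in each corresponding intersection.

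The remaining task is to make these choices Borel in $x$. Stratify $X$ by $n(x) = [H:L_x] \in \{1,2,\dots,\infty\}$; each stratum is Borel since $x\mapsto L_x$ is a Borel map into the Effros Borel space of closed subgroups of $H$ and the index is a Borel function. On each finite stratum $X_n$ ($n<\infty$), the Kuratowski--Ryll-Nardzewski selection theorem produces Borel functions $h_1,\dots,h_n,k_1,\dots,k_n\colon X_n\to H$ whose values are complete systems of representatives of $H/L_x$ and $R_x\backslash H$ respectively, and we set $T\cap HxH = \{h_i(x)\,x\,k_i(x):1\le i\le n\}$. On $X_\infty$ we need uniform Borel bijections of two varying uncountable Polish quotients of $H$; the plan is to use the (atomless) Haar pushforwards on $H/L_x$ and $R_x\backslash H$ to construct Borel-in-$x$ measure isomorphisms with $([0,1],\lambda)$, e.g.\ via distribution functions on a fixed countable dense family of open subsets of $H$, after which a measurable selection again supplies the element of each intersection.

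Finally, setting $T = \bigcup_{x\in X} T_x$, the set $\widetilde T = \{(x,g)\in X\times G : g\in T_x\}$ is Borel by construction, and the projection $(x,g)\mapsto g$ from $\widetilde T$ to $T$ is a bijection with Borel inverse $g\mapsto (r(g),g)$; hence $T$ is Borel, and by construction a common transversal. The chief obstacle is the infinite-index stratum $X_\infty$: producing a Borel family of bijections $\sigma_x$ between the varying uncountable quotients requires real work, since both the quotient spaces and their atomless measures vary with $x$, and the naive pointwise appeal to the Borel isomorphism theorem is not uniform; a parameterized version, or a more structural selection argument using the compact group structure of $H$, is needed.
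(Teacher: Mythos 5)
Your overall architecture matches the paper's: a Borel transversal for double cosets via a Kuratowski--Ryll-Nardzewski selector, the identification of the left/right cosets inside $HxH$ with $H/(H\cap xHx^{-1})$ and $(H\cap x^{-1}Hx)\backslash H$, stratification by the (common, by the index condition) cardinality of these quotients, and a fiberwise matching on each stratum. The finite strata are handled essentially as in the paper (repeatedly peeling off a Borel selector). However, the step you yourself flag as the ``chief obstacle'' is a genuine gap, and the repair you sketch would not work as stated. On $X_\infty$ you propose to build Borel-in-$x$ bijections by pushing forward Haar measure to $H/L_x$ and $R_x\backslash H$ and using distribution functions to get measure isomorphisms with $([0,1],\lambda)$. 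But a measure isomorphism is only defined and injective up to null sets, whereas a transversal must meet \emph{every} coset exactly once; discarding a null set of cosets in each double coset is fatal, and there is no canonical way to repair the exceptional set uniformly in $x$. The paper instead invokes Mauldin's Borel parametrization theorem, whose hypothesis --- a Borel subset of $Z_\infty\times X$ all of whose fibers are nonempty compact \emph{perfect} sets --- is verified by showing that orbits of a compact group are either finite or perfect (a Baire category argument plus transitivity) and that the fiber map $z\mapsto f^{-1}(\{z\})$ is continuous for an $H$-invariant complete metric. This yields an everywhere-defined Borel isomorphism $Z_\infty\times 2^\omega\to X_\infty$, which is exactly what the measure-theoretic route cannot deliver.

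A second point you pass over too quickly: even granted Borel parametrizations $\varphi(z,\cdot)$ of the left-coset fibers and $\psi(z,\cdot)$ of the right-coset fibers, one must choose, jointly Borel in $(x,\alpha)$, an element $h(x,\alpha)\in H$ with $h(x,\alpha)xH=\varphi(\pi(x),\alpha)$ (and similarly $k(x,\alpha)$), so that the transversal point $h(x,\alpha)\,x\,k(x,\alpha)$ is well defined. Your phrase ``a measurable selection again supplies the element of each intersection'' hides a real difficulty: the natural lift is only defined up to the varying subgroup $H\cap xHx^{-1}$, and one must check that the resulting map into $\mathcal{F}^*(H)$ (sending $(x,\alpha)$ to the coset $\tilde h(x,\alpha)(H\cap xHx^{-1})$ viewed as a closed subset of $H$) is Borel before a selector can be applied; the paper devotes a substantial Hausdorff-metric argument to exactly this. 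If you instead intend to select from the intersection $\varphi(\pi(x),\alpha)\cap\psi(\pi(x),\alpha)$ of a left and a right coset viewed as closed subsets of $G$, you would need to verify that this intersection map is Borel into $\mathcal{F}^*(G)$ --- which is false for general closed sets and requires a compactness argument here. So the proposal identifies the correct skeleton but omits, and in one place misdirects, the two steps that constitute the actual content of the proof.
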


The index condition is satisfied in a variety of situations. Our results on this question are summarized in the following theorem.

\begin{theorem}\label{thm index condition} Let $H\le G$ be as in Theorem \ref{thm main}. Then, the index condition for $H\le G$ holds whenever one of the following conditions are satisfied:
\begin{enumerate}
\item $G$ is an inverse limit of Lie groups, or
\item $H$ is a compact Lie group.
\end{enumerate}
\end{theorem}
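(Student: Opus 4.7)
Fix $x \in G$ and set $K := H \cap xHx^{-1}$ and $K' := H \cap x^{-1}Hx$, noting that conjugation by $x$ is a topological group isomorphism $K' \to K$. The argument rests on a \emph{dichotomy}: for a closed subgroup $L$ of a compact Polish group $H$, the coset space $H/L$ is either finite (equivalently, $L$ is open) or of cardinality $2^{\aleph_0}$. Indeed, if $L$ is not open then Steinhaus gives $\mu_H(L) = 0$, ruling out any countable partition of $H$ by translates of $L$, while an uncountable compact Polish space has cardinality $2^{\aleph_0}$.

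\emph{Part (2).} A closed subgroup of a compact Lie group is open iff it has full dimension iff it contains $H^\circ$. Since $K \cong K'$ via conjugation, they share the same dimension, so either both are open in $H$ or neither is. If neither is open, both indices equal $2^{\aleph_0}$. If both are open, both contain $H^\circ$, and a short connectedness argument ($xH^\circ x^{-1}$ and $x^{-1}H^\circ x$ are connected subgroups of $H$ containing $e$, hence lie in $H^\circ$) yields $xH^\circ x^{-1} = H^\circ$; thus conjugation descends to an isomorphism $K'/H^\circ \to K/H^\circ$ of subgroups of the finite group $H/H^\circ$, giving $[H:K] = [H/H^\circ : K/H^\circ] = [H/H^\circ : K'/H^\circ] = [H:K']$.

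\emph{Part (1).} Since $G$ is a Polish inverse limit of Lie groups, I choose a decreasing sequence $(N_n)$ of closed normal subgroups of $G$ with $\bigcap_n N_n = \{e\}$ and each $G/N_n$ a Lie group. Write $\pi_n \colon G \to G/N_n$, $H_n := \pi_n(H)$ (a compact Lie subgroup), and $K_n := H_n \cap \pi_n(x) H_n \pi_n(x)^{-1}$, with $K'_n$ defined analogously. Part (2) yields $c_n := |H_n/K_n| = |H_n/K'_n|$ for every $n$. Define $\tilde K_n := H \cap xHx^{-1}N_n$; a direct computation gives $\tilde K_n = \pi_n^{-1}(K_n) \cap H$, whence $|H/\tilde K_n| = c_n$, and $\bigcap_n \tilde K_n = K$ because $xHx^{-1}$ is closed and $(N_n)$ is a neighborhood base at $e$. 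Compactness of $H$ (FIP applied to decreasing closed cosets) upgrades the canonical map $H/K \to \varprojlim_n H/\tilde K_n$ to a bijection, and similarly for $K'$. If $\sup_n c_n < \infty$ the integer sequence stabilizes to some $N$, the transition maps become bijective beyond that point, and $|H/K| = N = |H/K'|$; if $\sup_n c_n = \infty$, then $|H/K| \geq \aleph_0$, and the dichotomy forces $|H/K| = 2^{\aleph_0} = |H/K'|$. The principal obstacle is this final reduction: identifying $H/K$ with the inverse limit via compactness, and then using the compact-Polish dichotomy to sidestep any need to match the two inverse systems (for $K$ and for $K'$) beyond the equal cardinalities at each level.
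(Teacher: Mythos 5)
Your proof is correct, and its overall architecture matches the paper's: settle the compact Lie case via the dimension/identity-component dichotomy, then reduce the pro-Lie case to the Lie quotients and pass to the limit. The differences are in the implementation. For the finite-or-continuum dichotomy you invoke Steinhaus and the Haar measure of a non-open closed subgroup; the paper instead observes that $H/K$ is a homogeneous compact metrizable space, so by Baire category it is either finite or has cardinality $2^{\aleph_0}$ (this is the content of Proposition \ref{prop finite fibers Fsigma}(i) and the remark after Proposition \ref{prop same index}). For the limit step, the paper's Lemma \ref{reduction to Lie case} proves $[H:K]=\lim_i[\pi_i(H):\pi_i(K)]$ for the \emph{images} $\pi_i(K)$ of $K=H\cap xHx^{-1}$, by a contradiction argument using sequential compactness of $K$, and then notes that $\pi_i(H\cap xHx^{-1})$ and $\pi_i(H\cap x^{-1}Hx)$ are conjugate in the Lie quotient. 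You instead work with the larger subgroups $K_n=H_n\cap\pi_n(x)H_n\pi_n(x)^{-1}$ and their pullbacks $\tilde K_n=H\cap xHx^{-1}N_n$, and identify $H/K$ with $\varprojlim_n H/\tilde K_n$ by a finite-intersection-property argument. Your choice has the advantage that each $K_n$ is literally of the ``conjugate intersection'' form, so the Lie case applies verbatim at every level, at the small cost of having to justify $\bigcap_n\tilde K_n=K$ (which does require, as you note, that the kernels $N_n$ form a neighbourhood base at the identity --- automatic when $G$ carries the inverse-limit topology). Both routes are sound; the paper's limit formula is slightly more general in that it applies to an arbitrary compact $K\le H$, while yours stays closer to the specific subgroups at hand.
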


\begin{corollary} The index condition for $H \le G$ is satisfied for an arbitrary compact subgroup $H$, whenever $G$ is a locally compact group and $G/G^\circ$ is compact. In particular, this holds when $G$ is compact or when $G$ is connected and locally compact.
\end{corollary}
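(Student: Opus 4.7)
The plan is to reduce the corollary to part (1) of Theorem \ref{thm index condition} by invoking the classical approximation theorem for almost connected locally compact groups. Recall that a locally compact group $G$ is called \emph{almost connected} precisely when $G/G^\circ$ is compact, and the Gleason--Yamabe theorem says that every almost connected locally compact group is a projective limit of Lie groups: more concretely, for every neighborhood $U$ of the identity there is a compact normal subgroup $K \subset U$ with $G/K$ a Lie group, whence $G \cong \varprojlim_K G/K$. This is exactly the hypothesis of Theorem \ref{thm index condition}(1).

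Thus the proof reduces to two steps. First I would quote the Gleason--Yamabe structure theorem to conclude that a locally compact $G$ with $G/G^\circ$ compact is an inverse limit of Lie groups. Second, an application of Theorem \ref{thm index condition}(1) yields that the index condition $[H\colon H\cap xHx^{-1}] = [H\colon H\cap x^{-1}Hx]$ holds for every compact subgroup $H \leq G$ and every $x \in G$.

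For the two special cases listed in the ``In particular'' clause, the verification is immediate. If $G$ is compact, then $G/G^\circ$ is a continuous image of $G$ and so is compact. If $G$ is connected and locally compact, then $G^\circ = G$ and $G/G^\circ$ is trivial, hence compact. In either situation the first part of the corollary applies.

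The main (and essentially only) substantive ingredient is the citation of the Gleason--Yamabe theorem; there is no genuine obstacle beyond recognizing that ``almost connected locally compact'' is the same as ``pro-Lie (in the inverse-limit sense)''. Consequently, the corollary is a direct consequence of Theorem \ref{thm index condition}(1), and no new estimate on indices of intersections of conjugates of $H$ is required beyond what has already been established.
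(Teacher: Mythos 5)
Your proof is correct and is exactly the paper's intended argument: the authors state the Gleason--Yamabe theorem immediately after Proposition \ref{prop same index} precisely so that the corollary follows from Theorem \ref{thm index condition}(1), with the two special cases handled just as you do. (Only the parenthetical claim that almost connectedness is \emph{equivalent} to being an inverse limit of Lie groups overstates matters---e.g.\ an infinite discrete group is a Lie group but not almost connected---but only the implication you actually use is needed.)
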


We want to record that the result is sharp in the sense that the results in Theorem \ref{thm index condition} fails in general if $G$ is locally compact, but not an inverse limit of Lie groups, or $H$ is compact, but not a compact Lie group, see Examples \ref{discrete} and \ref{compact}.

Finally, we return to results about discrete groups. As a consequence of our results on topological groups, we conclude the existence of common transversals for subgroups of countable, discrete groups as long as they are closed in the Bohr topology, see Theorem \ref{bohrtrans}. In particular, this applies to pro-finitely closed subgroups.

\section{The index condition}

In this section, we want to show that the index condition is satisfied in the situations described by Theorem \ref{thm index condition}. Moreover, we explain how the index condition is used in order to find a common transversal of left and right cosets contained in a fixed double coset. This will give a motivation for the proof of the other main theorem.

\begin{proposition} \label{prop same index}
Let $G$ be a Polish group, which is also an inverse limit of Lie groups, and let $H$ be a compact subgroup of $G$. Moreover, let $x\in G$ be arbitrary. Then, $[H\colon H\cap xHx^{-1}]=[H\colon H\cap x^{-1}Hx]\in \N_{\geq 1}\cup \{2^{\omega}\}$. 
\end{proposition}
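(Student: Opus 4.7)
The plan is to reduce to the case in which $G$ itself is a Lie group, where the conclusion follows from elementary dimension and connected-component counting. Since $G$ is an inverse limit of Lie groups, fix a decreasing sequence of closed normal subgroups $K_n\trianglelefteq G$ with $\bigcap_n K_n=\{e\}$ and each $G_n:=G/K_n$ a Lie group. Let $\pi_n\colon G\to G_n$ denote the projection, and set $H_n:=\pi_n(H)$ (a compact Lie subgroup of $G_n$), $x_n:=\pi_n(x)$, $A_n:=H_n\cap x_nH_nx_n^{-1}$, $B_n:=H_n\cap x_n^{-1}H_nx_n$, together with $A:=H\cap xHx^{-1}$ and $B:=H\cap x^{-1}Hx$.

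\textbf{The Lie case.} Conjugation by $x^{-1}$ maps $A_n$ topologically isomorphically onto $B_n$, so $\dim A_n=\dim B_n$ and $|\pi_0(A_n)|=|\pi_0(B_n)|$. If $\dim A_n<\dim H_n$, then $H_n/A_n$ is a compact manifold of positive dimension, so $[H_n\colon A_n]=2^\omega$, and likewise $[H_n\colon B_n]=2^\omega$. Otherwise $A_n^{\circ}$ is a connected closed subgroup of $H_n$ of dimension $\dim H_n^{\circ}$, forcing $A_n^{\circ}=H_n^{\circ}$; hence $A_n$ is a union of connected components of $H_n$, and $[H_n\colon A_n]=|\pi_0(H_n)|/|\pi_0(A_n)|$, with the same identity for $B_n$. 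Equality of the component counts yields $[H_n\colon A_n]=[H_n\colon B_n]$ in $\N\cup\{2^\omega\}$.

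\textbf{Inverse-limit identification.} A compactness argument shows $H=\bigcap_n\pi_n^{-1}(H_n)$: if $\pi_n(g)\in H_n$ for every $n$, choose $h_n\in H$ with $\pi_n(h_n)=\pi_n(g)$ and pass to a subsequence converging to some $h\in H$; then $h_n^{-1}g\in K_n$ converges to $e$, so $g=h\in H$. The same reasoning applied to $xHx^{-1}$ gives $A=\bigcap_n\pi_n^{-1}(A_n)$ and $B=\bigcap_n\pi_n^{-1}(B_n)$. The surjections $H\to H_n$ descend to surjective, compatible maps $H/A\to H_n/A_n$, and a finite-intersection-property argument on the closed preimages in $H$ of coherent tuples yields $H/A=\varprojlim_n H_n/A_n$ in the category of compact Hausdorff spaces; analogously for $B$.

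\textbf{Conclusion.} If $[H\colon A]$ is finite, then the inverse system $H/A=\varprojlim H_n/A_n$ with surjective bonding maps stabilizes, so $[H\colon A]=[H_n\colon A_n]$ for all large $n$; applying the Lie case and the analogous statement for $B$ gives $[H\colon A]=[H\colon B]$. If $[H\colon A]$ is infinite, then either some $[H_n\colon A_n]$ is infinite (whence $[H_n\colon B_n]$ is infinite by the Lie case, so $[H\colon B]$ is infinite), or all $[H_n\colon A_n]$ are finite but unbounded (otherwise the limit would be finite), and then $[H_n\colon B_n]$ is equally unbounded, forcing $[H\colon B]$ to be infinite. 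Finally, $H/A$ is a compact Polish space on which $H$ acts transitively; by homogeneity, it has no isolated points unless it is finite, and a standard Cantor-set argument then gives $|H/A|=2^\omega$. The main technical step is the inverse-limit identification $H/A=\varprojlim H_n/A_n$ (and its analogue for $B$); once this is in hand, the Lie-theoretic input reduces to counting connected components.
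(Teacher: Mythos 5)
Your proof is correct and follows essentially the same strategy as the paper: reduce to the Lie case via the inverse-limit structure and a compactness argument using the compactness of $H$, then use that $H\cap xHx^{-1}$ and $H\cap x^{-1}Hx$ are conjugate, hence of equal dimension and with isomorphic $\pi_0$, so that both indices are $2^\omega$ or both equal the ratio of component counts. The only cosmetic difference is that you identify $H/A$ with $\varprojlim_n H_n/(H_n\cap x_nH_nx_n^{-1})$ (intersections of images), whereas the paper's Lemma on reduction to the Lie case proves the index formula $[H:K]=\lim_n[\pi_n(H):\pi_n(K)]$ applied to the images $\pi_n(H\cap xHx^{-1})$ of the intersection; both versions work and rest on the same compactness argument.
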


Let $K \leq H$ be a closed subgroup. Note that since $[H\colon K]$ is the cardinality of the compact metrizable space $H/K$, it is either finite or continuum. 

The following result is a consequence of the famous Gleason-Yamabe theorem \cite{MR58607} and \cite[Lemma 4.5]{MR39730}.

\begin{theorem}[Gleason-Yamabe] Every locally compact group $G$ with $G/G^{\circ}$ compact is an inverse limit of Lie groups.
\end{theorem}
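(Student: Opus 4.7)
The plan is to deduce the statement directly from the structure theorem of Gleason and Yamabe by packaging arbitrarily small compact normal subgroups with Lie quotients into an inverse system. Since the cited references already provide the hard analytic input, the proof reduces to (i) extracting from those references a cofinal family of quotient maps $G \to G/K$ with $K$ compact, normal in $G$, and $G/K$ a Lie group, and then (ii) verifying that the canonical comparison map from $G$ to the inverse limit of this system is an isomorphism of topological groups.

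First, I would isolate the key ingredient. Yamabe's theorem applied to the connected locally compact group $G^\circ$ gives, for every neighborhood $V$ of the identity in $G^\circ$, a compact subgroup $K \subset V$ that is normal in $G^\circ$ with $G^\circ/K$ a Lie group. The refinement cited from \cite[Lemma 4.5]{MR39730} is what upgrades normality in $G^\circ$ to normality in $G$ under the hypothesis that $G/G^\circ$ is compact: using compactness of $G/G^\circ$, one forms the intersection of the finitely many (up to a small adjustment) $G$-conjugates of $K$ that arise, obtaining a compact subgroup $K' \subset V$ that is normal in $G$, and then verifies that $G/K'$ is still a Lie group (Lie-ness is preserved under taking a quotient by a compact normal subgroup of a Lie group, using the fact that $G^\circ/K' \hookrightarrow G/K'$ as an open subgroup together with compactness of $G/G^\circ$).

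Once such $K$'s exist, I would let $\mathcal{K}$ denote the collection of all compact normal subgroups $K \triangleleft G$ with $G/K$ a Lie group, partially ordered by reverse inclusion, and note that by the previous step $\mathcal{K}$ is directed and its members form a neighborhood basis of the identity in $G$. The quotient maps $\pi_{K,K'}\colon G/K' \to G/K$ for $K' \subset K$ form an inverse system of Lie groups, and the canonical homomorphism $\Phi \colon G \to \varprojlim_{K\in\mathcal{K}} G/K$ is continuous. Injectivity of $\Phi$ follows from $\bigcap_{K\in\mathcal{K}} K = \{e\}$, which in turn follows because $\mathcal{K}$ is a neighborhood basis and $G$ is Hausdorff. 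For surjectivity and openness, I would use a standard approximation argument: given a compatible family $(g_K K)_K$ in the inverse limit, choose $K_n$ decreasing to $\{e\}$ and representatives $g_n \in g_{K_n}K_n$; the $g_n$ form a Cauchy net in the uniform structure of $G$ because $g_n g_m^{-1} \in K_{\min(n,m)}$, and local compactness together with completeness of $G$ yield a limit $g \in G$ with $\Phi(g) = (g_K K)_K$. Openness follows from the fact that the image of a basic neighborhood $K$ maps onto the kernel of the projection to $G/K$.

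The main obstacle is the step where compact normal subgroups of $G^\circ$ with Lie quotients are upgraded to compact normal subgroups of $G$ — here one really needs the hypothesis that $G/G^\circ$ is compact in order to control the conjugation action of $G$ on $G^\circ$ and to guarantee that finitely many conjugates suffice. Without this hypothesis the statement is false (a discrete group of infinite rank need not be an inverse limit of Lie groups). Once the upgrade is in hand, the inverse-limit assembly is a formal exercise using that $G$ is complete as a uniform space (being locally compact Hausdorff) and that the $G/K$ are Lie groups, hence in particular locally compact, so that the comparison map with dense image is necessarily surjective.
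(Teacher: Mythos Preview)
The paper does not give a proof of this statement; it is recorded with attribution to the cited references and then used as a black box. So there is no proof in the paper to compare against.

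Your sketch has a real gap. You extract $K\triangleleft G^\circ$ from Yamabe's theorem, form $K'=\bigcap_g gKg^{-1}$, and then claim that $G^\circ/K'\hookrightarrow G/K'$ is an \emph{open} subgroup in order to conclude that $G/K'$ is a Lie group. But $G^\circ/K'$ is the identity component of $G/K'$, and the identity component of a locally compact group is open only when the group is locally connected---equivalently here, only when $G^\circ$ is open in $G$, i.e.\ $G/G^\circ$ is discrete and hence (being compact) finite. The hypothesis is merely that $G/G^\circ$ is compact. Concretely, take $G=\mathbb{Z}_p$: then $G^\circ=\{e\}$, your $K$ and $K'$ are trivial, and your argument would assert that $G/K'=\mathbb{Z}_p$ is a Lie group, which it is not. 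The repair is to work with an \emph{open} subgroup rather than with $G^\circ$. The general Gleason--Yamabe structure theorem supplies an open $G'\le G$ that is pro-Lie; since $G^\circ\subset G'$ and $G/G^\circ$ is compact, $[G:G']$ is finite, and (using that $G/G^\circ$ is profinite) one may shrink to an open \emph{normal} $N\trianglelefteq G$ of finite index that is still pro-Lie. Then for $K\triangleleft N$ compact with $N/K$ Lie and $K$ small, the genuinely finite intersection $K'=\bigcap_{gN\in G/N} gKg^{-1}$ is compact and normal in $G$, $N/K'$ is Lie (it injects continuously into a finite product of copies of $N/K$, hence has no small subgroups), and $G/K'$ is Lie because it contains the open Lie subgroup $N/K'$ of finite index. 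Your inverse-limit assembly in the final paragraph is fine once this correction is in place.
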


\begin{lemma} \label{reduction to Lie case}
Let $G$ be an inverse limit of Lie groups $G= \lim_{i} G_i,$ with projections $\pi_i \colon G \to G_i$, and $K \leq H$ be a compact subgroups of $G$. The following formula holds. 
\[[H:K]=\lim_{i\to \infty}[\pi_i(H) : \pi_i(K)]\]
\end{lemma}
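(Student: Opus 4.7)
The plan is to prove the lemma in a few steps, treating the index set as a directed system.

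First, I would establish the easy inequality together with the monotonicity of the net. The continuous homomorphism $\pi_i|_H\colon H\to\pi_i(H)$ carries $K$ onto $\pi_i(K)$, so it descends to a continuous surjection $H/K\twoheadrightarrow \pi_i(H)/\pi_i(K)$, giving $[H:K]\ge[\pi_i(H):\pi_i(K)]$. For $i\le j$, the relation $\pi_i=p_{ij}\circ\pi_j$ induces a surjection $\pi_j(H)/\pi_j(K)\twoheadrightarrow\pi_i(H)/\pi_i(K)$, so the net of indices is monotone non-decreasing. Hence its limit exists as a supremum and is at most $[H:K]$.

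The main technical input is the identity
\[
\bigcap_i K\cdot\ker\pi_i \;=\; K
\]
for any compact subgroup $K\le G$. To prove it, take $x$ in the left side and write $x=k_in_i$ with $k_i\in K$, $n_i\in\ker\pi_i$. Compactness of $K$ produces a subnet with $k_i\to k\in K$; then $n_i=k_i^{-1}x\to k^{-1}x$ along this subnet. Since every basic open neighborhood of $e$ in the inverse-limit topology has the form $\pi_j^{-1}(V)\supseteq\ker\pi_j$, and $\ker\pi_i\subseteq\ker\pi_j$ for $i\ge j$, any neighborhood of $e$ eventually contains $n_i$. Hence $n_i\to e$ and $x=k\in K$.

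With this identity in hand, the reverse inequality is immediate. Given $n$ distinct cosets $h_1K,\ldots,h_nK\subseteq H$, we have $h_j^{-1}h_k\notin K$ for every pair $j\ne k$, so the identity above yields some index $i_{j,k}$ with $h_j^{-1}h_k\notin K\cdot\ker\pi_{i_{j,k}}$. Choosing $i^*$ to dominate the finitely many $i_{j,k}$, and using $\pi_i^{-1}(\pi_i(K))=K\cdot\ker\pi_i$ together with the monotonicity of the kernels, one obtains $\pi_i(h_j)\pi_i(K)\ne\pi_i(h_k)\pi_i(K)$ for all $i\ge i^*$, hence $[\pi_i(H):\pi_i(K)]\ge n$ eventually. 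If $[H:K]$ is finite, this matches the first step; if $[H:K]=2^{\omega}$, letting $n$ be arbitrary shows the supremum of the indices is infinite, again matching $[H:K]$.

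The main obstacle I expect is the identity $\bigcap_i K\cdot\ker\pi_i=K$: it requires a careful combination of the compactness of $K$, the fact that the kernels are decreasing along the directed set, and the fact that they form a refining neighborhood system at $e$ (in the sense that every neighborhood eventually contains some $\ker\pi_j$). Once this compactness lemma is in place, everything else is monotonicity bookkeeping.
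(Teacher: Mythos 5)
Your argument is correct, and at its core it runs on the same engine as the paper's proof: the monotone comparison $[\pi_i(H):\pi_i(K)]\le[\pi_{j}(H):\pi_{j}(K)]\le[H:K]$ for $i\le j$, followed by a compactness argument in $K$ combined with the fact that the kernels $\ker\pi_i$ shrink to $\{e\}$. The packaging, however, is genuinely different. The paper argues by contradiction: if the limit $c'$ were strictly smaller than $[H:K]$, the indices would be finite and eventually constant, one could fix a finite set of representatives $s_1,\dots,s_{c'}$ working at every level $i\ge j$, and then an element $s_0$ outside $\bigcup_k s_kK$ would satisfy $\pi_i(s_0)\in\pi_i(s_{k_0}K)$ for all large $i$; extracting a convergent subsequence from $K$ forces $s_0\in s_{k_0}K$, a contradiction. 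You instead isolate the identity $\bigcap_i K\cdot\ker\pi_i=K$ as a standalone lemma (essentially the statement that the compact subgroup $K$ is closed for the filter of kernel neighborhoods) and then show directly that any $n$ distinct cosets of $K$ in $H$ remain distinct in $\pi_i(H)/\pi_i(K)$ for $i$ large, via $\pi_i^{-1}(\pi_i(K))=K\cdot\ker\pi_i$. This buys you two things: the argument is direct rather than by contradiction, and it never needs the indices to stabilize at a finite value, so it handles the finite and uncountable cases uniformly and works verbatim over a general directed index set rather than a sequence. Both proofs share the same mild imprecision about what the ``limit'' means when the indices are finite but unbounded (the fibers here are compact metrizable, so each index lies in $\N\cup\{2^\omega\}$, and only the dichotomy finite versus infinite is used downstream), so this is not a defect relative to the paper.
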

\begin{proof}
Let $c=[H:K], c_i=[\pi_i(H): \pi_i(K)]\,(i\in \N)$. 
Since $G_i$ is a quotient of $G$ and of $G_{i+1}$, we have $c_i\le c_{i+1}\le c$. 
In particular, the limit $\disp c'=\lim_{i\to \infty}c_i$ exists and satisfies $c'\le c$.

Assume by contradiction that $c'<c$. Because each $c_i$ is then an integer, there exists $j\in \N$ such that $c_i=c'\,(i\ge j)$. 
Then let $\{s_1,\dots,s_{c'}\}\subset H$ be such that its image in $\pi_j(H)$ is the representatives for the left cosets of $\pi_j(K)$ in $\pi_j(H)$. Then we have 
\[\pi_i(H)=\bigsqcup_{k=1}^{c'}\pi_i(s_k)\pi_i(K)\,\,(i\ge j).\]
By $c'<c$, there exists $s_0\in G$ such that $s_0\notin \bigcup_{k=1}^{c'} s_k K$. 
On the other hand, for each $i\ge j$ we have $\pi_i(s_0)\in \bigcup_{k=1}^{c'} \pi_i(s_k K)$. Passing to a subsequence if necessary, we may find $k_0\in \{1,\dots,c'\}$ and $k_i\in K$ such that $\pi_i(s_0)=\pi_i(s_{k_0}k_i)$ for all $i\ge j$, and by using the sequential compactness of $K$, we may further pass to a subsequence to assume that $k_i$ converges to some $k\in K$ as $i\to \infty$. Then $s_0=s_{k_0} k$, contradicting our choice of $s_0$. Thus $c'=c$ holds.
\end{proof}

For a topological space, we denote by $\pi_0(X)$ the set of path connected components of $X$. If $H$ is a topological group, we denote by $H^{\circ}$ the path connected component of the identity element and note that $H^{\circ}$ is a normal subgroup of $H$. If $H$ is a compact Lie group, then $H^{\circ}$ is an open subgroup of $H$. The set $\pi_0(H)$ is naturally identified with $H/H^{\circ}$. In particular, in this case $\pi_0(H)$ is a group in a natural way.

\begin{lemma} \label{Lie case}
Let $G$ be a Polish group, $K \leq H$ be a compact Lie subgroups of $G$. Then,
\begin{enumerate}
    \item $[H\colon K]$ is finite if and only if $\dim(H)=\dim(K)$ holds.
    \item If the latter condition holds, then $H^{\circ}=K^{\circ}$ and the equation $$[H\colon K]=[\pi_0(H)\colon \pi_0(K)]$$ is satisfied.
\end{enumerate}  
\end{lemma}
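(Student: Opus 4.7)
The plan is to use standard facts from Lie theory about closed subgroups and dimension. Since $K$ is compact, it is closed in $H$, so by Cartan's closed subgroup theorem $K$ is an embedded Lie subgroup of the compact Lie group $H$. Consequently, $H/K$ is a smooth compact manifold of dimension $\dim(H)-\dim(K)$.

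For part (1), note that $[H\colon K]$ equals the cardinality $|H/K|$ of this manifold. If $\dim(H)=\dim(K)$, then $H/K$ is a compact $0$-dimensional manifold, hence a finite set. Conversely, if $[H\colon K]$ is finite, then $H/K$ is finite and in particular $0$-dimensional, forcing $\dim(H)=\dim(K)$.

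For part (2), I would first show that $K$ is open in $H$ whenever $\dim(H)=\dim(K)$. Since $K$ is an embedded submanifold of $H$ of the same dimension, it contains an open neighborhood of the identity in $H$; translation by elements of $K$ then shows that $K$ itself is open in $H$. An open subgroup contains the identity component, so $H^{\circ}\subseteq K$. Combined with the obvious inclusion $K^{\circ}\subseteq H^{\circ}$ (as $K^{\circ}$ is a connected subset of $H$ containing the identity) and $H^{\circ}\subseteq K$ implying $H^{\circ}\subseteq K^{\circ}$ (by connectedness of $H^{\circ}$), we obtain $H^{\circ}=K^{\circ}$.

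Finally, since $H^{\circ}\subseteq K\subseteq H$ and $H^{\circ}$ is normal in $H$, the quotient map $H\to H/H^{\circ}=\pi_0(H)$ induces a bijection between $H/K$ and $\pi_0(H)/\pi_0(K)$, yielding the identity $[H\colon K]=[\pi_0(H)\colon \pi_0(K)]$. The only slightly non-trivial ingredient here is the fact that a closed subgroup of a Lie group of matching dimension is automatically open, which I do not expect to be the main obstacle, but is the single place where smoothness of $H$ and $K$ is really used.
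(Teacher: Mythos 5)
Your proposal is correct and follows essentially the same route as the paper: both arguments reduce to showing that $\dim(H)=\dim(K)$ forces $H^{\circ}=K^{\circ}$ and then identify $H/K$ with $\pi_0(H)/\pi_0(K)$, which is finite since $\pi_0$ of a compact Lie group is finite. The only cosmetic difference is that you justify $H^{\circ}=K^{\circ}$ via openness of $K$ in $H$ (equal-dimensional embedded submanifold), whereas the paper does it by noting the Lie algebras coincide and the identity components are generated by the exponential image; these are interchangeable standard facts.
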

\begin{proof}
Note that $\dim(H)=\dim(K)$ if and only if the Lie algebras of $H$ and $K$ agree -- and this is equivalent to $H^{\circ}=K^{\circ}$, since the path connected components are generated via the exponential map from the Lie algebra. This implies that $H/K$ is in bijection with $(H/K^{\circ})/(K/K^{\circ}) = \pi_0(H)/\pi_0(K)$ proving that $[H:K]$ is finite and equal to the index of $\pi_0(K)$ in $\pi_0(H)$ in this case.
Conversely, if the index of $K$ in $H$ is finite, then clearly $\dim(H)=\dim(K)$.
\end{proof}

\begin{proof}[Proof of Proposition \ref{prop same index}]
By Lemma \ref{reduction to Lie case} applied to $K=H\cap xHx^{-1}$ and $K=H\cap x^{-1}Hx$, we may assume that $G$ is a Lie group.

Note that the groups $H \cap xHx^{-1}$ and $H \cap x^{-1}Hx$ are topologically isomorphic since they are conjugate by $x \in G.$ In particular, their dimensions agree and Lemma \ref{Lie case} applied to $K = H \cap xHx^{-1}$ and $K=H \cap x^{-1}Hx$ implies that the indices of these groups in $H$ are either both finite or both infinite. In the first case, by the second part of Lemma \ref{Lie case}, the indices are identical with $[\pi_0(H):\pi_0(H \cap xHx^{-1})]$ and $[\pi_0(H):\pi_0(H \cap x^{-1}Hx)]$ respectively.
However, $\pi_0$ of a compact Lie group is finite and $\pi_0(H \cap xHx^{-1})$ is isomorphic to $\pi_0(H \cap x^{-1}Hx)$. Thus, the indices agree and are equal to $\sharp \pi_0(H) / \sharp \pi_0(H \cap x^{-1}Hx)$. This completes the proof.
\end{proof}

\begin{proposition} \label{lem index cpt lie}
Let $G$ be a Polish group and $H$ be a closed subgroup of $G$ which is a compact Lie group. Then the inclusion $H\le G$ satisfies the index condition: for every $x\in G$, the equality $[H: H\cap xHx^{-1}]=[H: H\cap x^{-1}Hx]$ holds.
\end{proposition}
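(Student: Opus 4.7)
The plan is to mimic the final paragraph of the proof of Proposition~\ref{prop same index}, since once we know that $H$ itself is a compact Lie group, there is no need for the inverse-limit reduction provided by Lemma~\ref{reduction to Lie case}; we can apply Lemma~\ref{Lie case} directly inside $H$.

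First I would fix $x \in G$ and set $K_1 = H \cap xHx^{-1}$ and $K_2 = H \cap x^{-1}Hx$. Both are closed subgroups of the compact Lie group $H$, hence are themselves compact Lie groups. The key symmetry input is that conjugation by $x$ is a topological group isomorphism $K_2 \to K_1$, because $x(H \cap x^{-1}Hx)x^{-1} = xHx^{-1} \cap H = K_1$. In particular $\dim(K_1) = \dim(K_2)$ and $\pi_0(K_1) \cong \pi_0(K_2)$ as finite groups.

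Next I would split into two cases according to Lemma~\ref{Lie case}(1). If $\dim(K_1) = \dim(K_2) < \dim(H)$, then both indices $[H:K_1]$ and $[H:K_2]$ are infinite; since $H/K_i$ is a compact metrizable space, both cardinalities equal $2^{\omega}$, so they agree. If on the other hand $\dim(K_i) = \dim(H)$, then part (2) of Lemma~\ref{Lie case} applies and gives
\[
[H:K_i] \;=\; [\pi_0(H) : \pi_0(K_i)] \;=\; \frac{\sharp \pi_0(H)}{\sharp \pi_0(K_i)}, \qquad i = 1,2,
\]
where the second equality uses that $\pi_0$ of a compact Lie group is finite. Since $\sharp \pi_0(K_1) = \sharp \pi_0(K_2)$ by the isomorphism above, the two indices coincide.

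There is no real obstacle here; the content of the statement is entirely absorbed into Lemma~\ref{Lie case} together with the elementary observation that $K_1$ and $K_2$ are conjugate in $G$. The only minor point to be careful about is to handle the infinite-index case uniformly, which is why one invokes the dichotomy $[H:K] \in \N \cup \{2^\omega\}$ before passing to the finite case.
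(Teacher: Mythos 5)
Your proof is correct and follows essentially the same route as the paper: the paper's own proof simply applies Lemma~\ref{Lie case} directly to $K=H\cap xHx^{-1}$ and $K=H\cap x^{-1}Hx$ (which are conjugate, hence topologically isomorphic, closed subgroups of the compact Lie group $H$), exactly as in the final paragraph of the proof of Proposition~\ref{prop same index} that you mimic. Your explicit case split between equal and unequal dimensions just spells out what the paper leaves implicit.
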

\begin{proof}
In this case, we can apply Lemma \ref{Lie case} directly to $K=H\cap xHx^{-1}\le H$ and $K=H\cap x^{-1}Hx \le H$ as subgroups of $G.$
\end{proof}

Note that a left coset and a right coset of $H$ in $G$ have a common representative if and only if they lie in the same double coset. 
Indeed, we recall the following well-known lemma.

\begin{lemma}
Let $x,x' \in G$. The following conditions are equivalent:
\begin{enumerate}
    \item The cosets $xH$ and $Hx'$ intersect.
    \item The cosets $xH$ and $Hx'$ have a common representative, i.e.\ there exists $x'' \in G$ such that $xH=x''H$ and $Hx'=Hx''$.
    \item The cosets $xH$ and $Hx'$ lie in one double coset, i.e. there exists $x'' \in H$ such that $xH \cup Hx' \subset Hx''H$.
 \end{enumerate}
\end{lemma}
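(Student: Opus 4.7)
The plan is to establish the cyclic implications $(1)\Rightarrow(2)\Rightarrow(3)\Rightarrow(1)$, since each step is short and the conditions are of decreasing/increasing directness with respect to the intersection property. Throughout I read $(3)$ with $x''\in G$ (the literal statement $x''\in H$ would collapse the double coset to $H$); this is the standard phrasing of ``lying in a single double coset.''

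For $(1)\Rightarrow(2)$, I simply pick any element $y\in xH\cap Hx'$ and take $x''=y$. Writing $y=xh_1=h_2x'$ with $h_1,h_2\in H$ makes it immediate that $yH=xh_1H=xH$ and $Hy=Hh_2x'=Hx'$, so $x''=y$ works.

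For $(2)\Rightarrow(3)$, from $xH=x''H$ and $Hx'=Hx''$ I get $x\in x''H\subset Hx''H$ and, symmetrically, $x'\in Hx''\subset Hx''H$. Since double cosets are unions of both left and right cosets of $H$, the entire cosets $xH$ and $Hx'$ sit inside $Hx''H$.

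For $(3)\Rightarrow(1)$, write $x=h_1x''h_2$ and $x'=h_3x''h_4$ for some $h_1,h_2,h_3,h_4\in H$. Then the element $z:=h_1x''h_4$ satisfies $z=(h_1x''h_2)(h_2^{-1}h_4)=x\cdot h_2^{-1}h_4\in xH$ and $z=(h_1h_3^{-1})(h_3x''h_4)=(h_1h_3^{-1})\cdot x'\in Hx'$, so $z\in xH\cap Hx'$, which is therefore nonempty. There is no real obstacle here: the content of the lemma is just the standard observation that a left coset and a right coset meet exactly when they are contained in the same double coset, and I expect the proof to be essentially the bookkeeping above.
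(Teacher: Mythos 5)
Your proof is correct and follows essentially the same route as the paper's: the cyclic chain $(1)\Rightarrow(2)\Rightarrow(3)\Rightarrow(1)$, with the same choice of common representative in $xH\cap Hx'$ and the same bookkeeping with elements $h_i\in H$ for the last implication. Your reading of $(3)$ with $x''\in G$ is the intended one (the paper's ``$x''\in H$'' is a typo, as its own proof of $(2)\Rightarrow(3)$ reuses the $x''\in G$ from $(2)$), so there is nothing to add.
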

\begin{proof}
$(1) \Rightarrow (2):$ Take $x'' \in xH \cap Hx'$. $(2) \Rightarrow (3):$ The $x''$ that worked for $(2)$ also works for $(3)$. $(3) \Rightarrow (1):$ There exists $h_1,h_2,h'_1,h'_2$ such that $h_1x''h_2=x$ and $h_1'x''h_2'=x'$. This implies that $h_1^{-1} x h_2^{-1}= x'' =h_1'^{-1} x' h_2'^{-1}$ and hence
$x h_2^{-1}h_2' = h_1h_1'^{-1} x'$. Thus, $xH$ and $Hx'$ intersect.
\end{proof}

Let us illustrate the use of the index condition by constructing a common Borel transversal for a single double coset. This is done in the following lemma. The index condition ensures that the number of left and right cosets contained in one double coset coincides. The proof of the main theorem will deal with the problem of doing the same construction in a Borel way for all double cosets at the same time. The argument below modulo the measurability issue is essentially contained in Ore's work \cite{MR0100639Ore1958}. 

\begin{lemma} Let $G$ be a Polish group and $H$ be a compact subgroup satisfying the index condition. For each $x \in G$, there exists a Borel set $Q \subset HxH$ which is simultaneously a set of left and right coset representatives with respect to $H$, i.e.
$$HxH = \sqcup_{a \in Q} Ha = \sqcup_{a \in Q} aH.$$
\end{lemma}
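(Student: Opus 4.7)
The strategy I will use is to identify the left and right cosets of $H$ contained in $HxH$ with explicit quotient spaces of $H$, choose Borel cross-sections for each, and pair them up by a Borel bijection whose existence is guaranteed by the index condition.

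First, set $K_1 := H \cap xHx^{-1}$ and $K_2 := H \cap x^{-1}Hx$. The maps $hK_1 \mapsto hxH$ and $K_2 h \mapsto Hxh$ give bijections from $H/K_1$ (resp.\ $K_2 \backslash H$) onto the set of left (resp.\ right) cosets of $H$ contained in $HxH$. By the index condition, $|H/K_1| = |K_2 \backslash H|$, a common value which, as already observed, is either a finite integer or $2^{\omega}$.

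Since $H$ is compact metrizable, I will obtain Borel cross-sections $\rho_L : H/K_1 \to H$ and $\rho_R : K_2 \backslash H \to H$ of the two quotient maps by applying the Kuratowski--Ryll-Nardzewski selection theorem to the closed-valued multifunction $c \mapsto c \subset H$. I will then fix a Borel bijection $\sigma : H/K_1 \to K_2 \backslash H$: if both sides are finite they have the same cardinality so any bijection works, and if both are uncountable they are uncountable standard Borel spaces and Kuratowski's isomorphism theorem supplies a Borel isomorphism.

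With these pieces in place, I define $f \colon H/K_1 \to HxH$ by $f(c) := \rho_L(c) \cdot x \cdot \rho_R(\sigma(c))$. This is Borel as a composition of Borel maps; it is injective because $f(c)$ lies in the left coset $\rho_L(c) xH$ associated with $c$, and different values of $c$ correspond to different left cosets. The Lusin--Souslin theorem then ensures that the image $Q := f(H/K_1)$ is a Borel subset of $HxH$. By construction, $Q$ meets each left coset of $H$ inside $HxH$ exactly once (parametrized by $c$) and each right coset exactly once (parametrized by $\sigma(c)$, which ranges bijectively over $K_2 \backslash H$), so $Q$ is the desired common transversal.

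I do not anticipate a serious obstacle beyond the bookkeeping with Borel cross-sections and Borel isomorphisms. The substantive input is the index condition, which supplies $\sigma$; the measurability is handled by the combination of Kuratowski--Ryll-Nardzewski (for $\rho_L$ and $\rho_R$), Kuratowski's isomorphism theorem (for $\sigma$ in the continuum case), and Lusin--Souslin (to promote the injective Borel image $f(H/K_1)$ to a Borel subset of $HxH$).
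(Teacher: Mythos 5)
Your proof is correct and follows essentially the same route as the paper: Borel cross-sections of $H \to H/(H\cap xHx^{-1})$ and $H \to (H\cap x^{-1}Hx)\backslash H$, a Borel bijection between them supplied by the index condition (finite case versus the $2^{\omega}$ case), and the common transversal $Q = \{\,t\,x\,s \mid s = \varphi(t)\,\}$. You are merely a bit more explicit than the paper about the sources of measurability (Kuratowski--Ryll-Nardzewski for the sections, Lusin--Souslin for the Borelness of the image), which the paper leaves as ``straightforward to check.''
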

\begin{proof}
Let $x \in G$ be fixed and consider the double coset $HxH \subset G.$ Note that each left coset $t'H$ contained in $HxH$ is of the form $txH$ for some $t \in H.$ Similarly, right cosets contained in $Hx$ are of the form $Hxs$ for $s \in H.$
    
Two cosets $txH$ and $xH$ coincide with $t \in H$ if and only if $t \in H \cap xHx^{-1}$. 
Let $T \subset H$ be a Borel section of the quotient map $H \to H \slash (H \cap xHx^{-1})$. 
This yields a decomposition $H= \sqcup_{t \in T} t(H \cap xHx^{-1}).$
It follows that $T \subset H$ also yields a decomposition of $HxH$ in the sense that we have $HxH = \sqcup_{t \in T} txH.$

Similarly, two cosets $Hxs$ and $Hx$ coincide if and only if $s \in H \cap x^{-1}Hx$. We take $S$ a Borel section of the map $H \to (H \cap x^{-1}Hx)\backslash H$ and observe that
$HxH = \sqcup_{s \in S} Hxs.$

Note that both $S$ and $T$ are standard Borel spaces, i.e.\ they are either finite or Borel isomorphic to $[0,1]$.
By Proposition \ref{prop same index}, $[H\colon H\cap xHx^{-1}]=[H\colon H\cap x^{-1}Hx]\in \N_{\geq 1}\cup \{2^{\omega}\}$ and hence the cardinalities of $S$ and $T$ agree. Thus, we may pick a Borel isomorphism $\varphi \colon T \to S$
and set $Q := \{ t x \varphi(t) \mid t \in T\} \subset HxH.$ It is straightforward to check that $Q$ is a Borel subset of $HxH$ which is simultaneously a set of left and right coset representatives, i.e., we have:
$$HxH = \sqcup_{a \in Q} Ha = \sqcup_{a \in Q} aH.$$
This finishes the proof.
\end{proof}

\begin{remark}
Sometimes, the index condition is automatically true provided both indices are known to be finite. Consider for example $H \le G$, when $H$ is a free group. Then, the Nielsen-Schreier formula implies that
$$[H:K] = \frac{{\rm rk}(K)-1}{{\rm rk}(H)-1},$$ provided the index is finite. In particular, the index does only depends on $K$ up to isomorphism and does not change, when $K$ is replaced by an isomorphic finite index subgroup of $H$. A similar phenomenon occurs, when the group $H$ has a finite and non-zero $\ell^2$-Betti number, since 
$$[H:K] = \frac{\beta_k^{(2)}(K)}{\beta_k^{(2)}(H)}.$$

This applies for example to the inclusion ${\rm SL}(2,\mathbb Z) \le {\rm SL}(2,\mathbb Q).$ Since
${\rm SL}(2,\mathbb Q)$ commensurates ${\rm SL}(2,\mathbb Z)$, all inclusions ${\rm SL}(2,\mathbb Z) \cap g{\rm SL}(2,\mathbb Z)g^{-1} \le {\rm SL}(2,\mathbb Z)$ for $g \in {\rm SL}(2,\mathbb Q)$ are of finite index. Since $\beta_1^{(2)}({\rm SL}(2,\mathbb Z)) =\frac1{12},$ the index condition is satisfied by the previous observation.

Note however, that this observation does not in general prevent the existence of isomorphic subgroup of infinite index.
\end{remark}

\begin{remark}
The index condition is satisfied if $G$ is a locally compact, unimodular, totally disconnected group and $H \le G$ is a compact open subgroup. Indeed, 
$$[H : H \cap gHg^{-1}] = \frac{\mu(H)}{\mu(H \cap gHg^{-1})}$$
in this case. Now, since $G$ is unimodular, the measure of $H \cap gHg^{-1}$ agrees with the measure of $H \cap g^{-1}Hg$, since these groups are conjugate. In particular, this applies to ${\rm SL}(n,\mathbb Z_p) \le {\rm SL}_n(\mathbb Q_p).$
\end{remark}

\section{Proof of Theorem \ref{thm main}}

We are now heading towards the proof of Theorem \ref{thm main}. As a first step, we need to find a Borel set of representatives of double cosets of $H$ in $G$.

\begin{lemma}\label{lem borel transverssal for doublecoset}
Let $G$ be a Polish group, $H$ be a compact subgroup of $G$. Then there exists a Borel subset $A\subset G$ which is a set of representatives for the double $H$-cosets: $G=\bigsqcup_{a\in A}HaH$.
\end{lemma}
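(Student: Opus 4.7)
The plan is to realize the double coset partition as the orbit decomposition of the compact Polish group $K := H \times H$ acting continuously on $G$ by $(h_1, h_2) \cdot g := h_1 g h_2^{-1}$. The orbits of this action are precisely the double cosets $HgH$, and since $H$ is compact each such orbit is a continuous image of $H \times \{g\} \times H$, hence compact and in particular closed in $G$. Writing $\pi \colon G \to H \backslash G / H$ for the quotient map onto the orbit space equipped with the quotient topology, one checks that $\pi$ is continuous and open, since for any open $U \subset G$ the saturation $\pi^{-1}(\pi(U)) = HUH = \bigcup_{h_1, h_2 \in H} h_1 U h_2$ is a union of open sets.

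Next, because $K$ is a compact Polish group acting continuously on the Polish space $G$, the action is automatically proper and the orbit space $H \backslash G / H$ inherits a Polish topology (this is a standard fact from descriptive set theory, e.g.\ in Kechris' book on Classical Descriptive Set Theory). The multifunction $\xi \mapsto \pi^{-1}(\xi) \subset G$ has closed (indeed compact) values and is lower semicontinuous: for any open $U \subset G$ the set of $\xi$ whose fiber meets $U$ is precisely $\pi(U)$, which is open by the openness of $\pi$. The Kuratowski--Ryll-Nardzewski measurable selection theorem then produces a Borel selector $s \colon H \backslash G / H \to G$ satisfying $\pi \circ s = \mathrm{id}$.

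Setting $A := s(H \backslash G / H)$, one has the equivalent description $A = \{g \in G : s(\pi(g)) = g\}$, the coincidence set of two Borel maps $G \to G$, and therefore a Borel subset of $G$. By construction $A$ meets every double coset in exactly one point, giving the disjoint decomposition $G = \bigsqcup_{a \in A} HaH$ as required. The main obstacle is ensuring the descriptive-set-theoretic setting is appropriate, i.e.\ that $H \backslash G / H$ is standard Borel and $\pi$ is sufficiently regular for a Borel selection to exist; compactness of $H$ supplies both through properness of the $K$-action. If one wished to bypass the quotient topology altogether, an alternative route is to first extract a Borel left-transversal $T \subset G$ for $H$ (via KRN applied to the open continuous surjection $G \to G/H$ with closed fibers) and then perform a second selection on $T$ for the closed equivalence relation with compact classes induced by the double coset partition.
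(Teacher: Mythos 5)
Your proposal is correct and follows essentially the same route as the paper: pass to the Polish quotient $H\backslash G/H$, observe that the fibers are closed and that the fiber map into the hyperspace is measurable (your lower-semicontinuity observation via openness of $\pi$ is exactly the paper's computation that $\pi^{-1}(j^{-1}(V))=HUH$ is open), apply Kuratowski--Ryll-Nardzewski to get a Borel selector $s$, and take $A$ to be the coincidence set $\{g : s(\pi(g))=g\}$. The only cosmetic difference is that you phrase the selection in the multifunction form of KRN while the paper composes a global selector $\sigma\colon\mathcal{F}^*(G)\to G$ with the Borel inclusion $H\backslash G/H\hookrightarrow\mathcal{F}^*(G)$.
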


\begin{proof}
Let $\pi\colon G\to H\setminus G/H$ be the quotient map. 
Since $G$ is a Polish group and $H$ is comact, the space $H\setminus G/H$ is also Polish. 
For each $g\in G$, we may view $\pi(g)=HgH$ as a closed subset of $G$, whence we may view $H\setminus G/H\subset \mathcal{F}^*(G)$. Note that the inclusion map $j\colon H\setminus G/H\to \mathcal{F}^*(G)$ is Borel. To see this, recall that $A\subset H\setminus G/H$ is Borel if and only if $\pi^{-1}(A)$ is a Borel subset of $G$. This follows from the fact that $\tilde{\mathcal{B}}=\{A\subset H\setminus G/H\mid \pi^{-1}(A)\text{\ is Borel\ }\}$ is a $\sigma$-algebra containing all open subsets of $H\setminus G/H$, thus containing all of its Borel subsets. Then if $U$ is any open subset of $G$ and $V=\{F\in \mathcal{F}(G)\mid F\cap U\neq \emptyset\}$, then 
$\pi^{-1}(j^{-1}(A))=\{g\in G\mid HgH\cap U\neq \emptyset\}=HUH$ is open in $G$, whence  it is Borel. The above remark then shows that $j^{-1}(V)$ is Borel. Thus $j$ is a Borel map. By Theorem \ref{thm Borel selector}, there exists a Borel map $\sigma \colon \mathcal{F}^*(G)\to G$ such that $\sigma (F)\in F$ for every $F\in \mathcal{F}^*(G)$. Define $s=\sigma\circ j\circ \pi\colon G\to G$ and $A=\{g\in G\mid s(g)=g\}$, which is a Borel subset of $G$. Note that $s(g)=\sigma(HgH)$ for every $g\in G$, and because $\sigma(HgH)\in HgH$, we have $\pi(s(g))=\pi(\sigma(HgH))=HgH$ and $s(s(g))=s(g)$. In particular, $s(g)\in A$ for every $g\in G$. Thus $G=\bigcup_{a\in A}HaH$ holds. If $a,a'\in A$, satisfy $HaH\cap Ha'H\neq \emptyset$, then $\pi(a)=HaH=Ha'H=\pi(a')$ and thus 
\eqa{
    a'&=s(a')=\sigma\circ j\circ \pi(a')\\
    &=\sigma \circ j\circ \pi(a)\\
    &=s(a)=a. 
}
Thus $G=\bigsqcup_{a\in A}HaH$ is the decomposition for the double $H$-cosets. 
\end{proof}

\begin{lemma}
    Let $X, Y$ be Polish spaces, $f\colon X\to Y$ be a continuous open surjection. Then the map $\iota_f\colon Y\ni y\mapsto f^{-1}(\{y\})\in \mathcal{F}^*(X)$ is Borel. 
\end{lemma}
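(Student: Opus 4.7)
The plan is to unpack the definition of the Effros Borel structure on $\mathcal{F}^*(X)$ and check Borel measurability on a generating family. Recall that the Effros Borel structure on $\mathcal{F}^*(X)$ is the $\sigma$-algebra generated by the sets
\[V_U := \{F \in \mathcal{F}^*(X) \mid F \cap U \neq \emptyset\}, \qquad U \subset X \text{ open}.\]
To prove that $\iota_f$ is Borel, it suffices to check that $\iota_f^{-1}(V_U)$ is a Borel subset of $Y$ for every open $U \subset X$.

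First I would verify that $\iota_f$ is well-defined with values in $\mathcal{F}^*(X)$: since $f$ is continuous, $f^{-1}(\{y\})$ is closed in $X$ for every $y \in Y$, and since $f$ is surjective, it is nonempty.

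The main (and essentially only) observation is then the following computation:
\[\iota_f^{-1}(V_U) = \{y \in Y \mid f^{-1}(\{y\}) \cap U \neq \emptyset\} = \{y \in Y \mid y \in f(U)\} = f(U).\]
Because $f$ is an open map, $f(U)$ is open in $Y$, hence Borel. As $U$ ranges over the open subsets of $X$, the sets $V_U$ generate the Effros Borel structure, so this proves that $\iota_f$ is Borel.

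I do not expect a genuine obstacle here; the whole content of the lemma is that openness of $f$ translates exactly the generating Borel sets of $\mathcal{F}^*(X)$ into open (hence Borel) subsets of $Y$. The only subtlety worth flagging explicitly in the write-up is that one should justify why the sets $V_U$ do generate the Effros Borel structure, which is standard and can be cited from Kechris' \emph{Classical Descriptive Set Theory}.
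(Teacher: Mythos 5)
Your proof is correct and is essentially identical to the one in the paper: both reduce Borelness of $\iota_f$ to the generating sets $\{F \mid F\cap U\neq\emptyset\}$ of the Effros Borel structure and observe that their preimages are exactly $f(U)$, which is open since $f$ is an open map. No further comment is needed.
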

\begin{proof}
Since $f$ is a continuous surjection, $f^{-1}(\{y\})\in \mathcal{F}^*(X)$ holds for every $y\in Y$.\\ 
It suffices to show that for each open set $U$ in $X$, the set $\iota_f^{-1}(B)$ is Borel in $Y$, where $B=\{F\in \mathcal{F}^*(X)\mid F\cap U\neq \emptyset\}$.
But \eqa{
\iota_f^{-1}(B)&=\{y\in Y\mid f^{-1}(\{y\})\cap U\neq \emptyset\}\\
&=f(U),
}
which is open becuase $f$ is an open mapping. Therefore $\iota_f$ is Borel.
\end{proof}
\begin{remark} It is unclear if $\iota_f$ can be continuous for an open continuous surjection $f\colon X\to Y$. Note that if $f$ is not assumed to be open, then $\iota_f$ is often discontinuous.
% $\iota_f\colon Y\to \mathcal{F}^*(X)$ is not continuous in general. 
For example, 
take $X=[0,1]\cup [2,3]$ and $Y=[0,2]$. Define a continuous function $f\colon X\to Y$ by $f(t)=t$ for $t\in [0,1]$ and $f(t)=t-1$ for $t\in [2,3]$. 
Let $y_n=1-\tfrac{1}{n}\,(n\in \N)$ and $y=1$. Then $y_n\xrightarrow{n\to \infty}y$, but $\iota_f(y_n)=\{1-\tfrac{1}{n}\}$, $\iota_f(y)=\{1,2\}$. Since the set $\mathcal{F}^*_{\le 1}(X)$ is closed, $\iota_f(y_n)\not\xrightarrow{n\to \infty}\iota_f(y)$. Actually $\iota_f(y_n)\xrightarrow{n\to \infty}\{1\}\subsetneq \iota_f(y)$. Thus $\iota_f(Y)$ is in general not closed in $\mathcal{F}^*(X)$, and the set $\{y\in Y\mid \sharp f^{-1}(\{y\})\le n\}$ is not closed either. 

\end{remark}
\begin{lemma}\label{lem: F_len is closed}
    Let $X$ be a Polish space, and $n\in \N$. Then with respect to the Wijsman topology, the set $\mathcal{F}^*_{\le n}(X)=\{F\in \mathcal{F}^*(X)\mid \sharp F\le n\}$ is closed. In particular, $\mathcal{F}^*_n(X)=\{F\in \mathcal{F}^*(X)\mid \sharp F=n\}$ is Borel. 
\end{lemma}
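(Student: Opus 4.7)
The plan is to argue the closure directly from the definition of the Wijsman topology, and then deduce the second statement by a simple set-theoretic manipulation.

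Fix a compatible metric $d$ on $X$. Recall that the Wijsman topology on $\mathcal{F}^*(X)$ is the coarsest topology making each distance function $\mathcal{F}^*(X) \ni F \mapsto d(x,F) \in \R$ continuous as $x$ ranges over $X$, so that this topology is metrizable and convergence $F_k \to F$ in $\mathcal{F}^*(X)$ is equivalent to $d(x,F_k) \to d(x,F)$ for every $x \in X$. This reduces the closedness claim to a statement about sequences.

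To show $\mathcal{F}^*_{\le n}(X)$ is closed, I would suppose $F_k \to F$ in $\mathcal{F}^*(X)$ with $\sharp F_k \le n$ for all $k$, and argue by contradiction that $\sharp F \le n$. If instead $F$ contained $n+1$ distinct points $y_0,\ldots,y_n$, pick pairwise disjoint open balls $U_i$ around each $y_i$, say of radius $r_i>0$. Because $d(y_i,F)=0$ and $d(y_i,F_k)\to d(y_i,F)$, for all sufficiently large $k$ we have $d(y_i,F_k)<r_i$ for every $i=0,\ldots,n$ simultaneously, which yields a point of $F_k$ in each $U_i$. Since the $U_i$ are disjoint, this produces $n+1$ distinct points in $F_k$, contradicting $\sharp F_k \le n$. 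Hence $\sharp F \le n$ and $\mathcal{F}^*_{\le n}(X)$ is closed.

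For the second assertion, one simply observes
\[
\mathcal{F}^*_n(X) = \mathcal{F}^*_{\le n}(X) \setminus \mathcal{F}^*_{\le n-1}(X)
\]
for $n\ge 1$ (with the convention $\mathcal{F}^*_{\le 0}(X)=\emptyset$, since members of $\mathcal{F}^*(X)$ are nonempty), and this is the intersection of a closed set with the complement of a closed set, hence Borel.

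I do not anticipate a serious obstacle: the only point that requires care is to make sure one uses the Wijsman topology and not, say, the Vietoris or Hausdorff topology (which would behave differently on unbounded Polish spaces). The remark immediately preceding the lemma, which exhibits a sequence in $\mathcal{F}^*_1$ limiting to a two-point set in the Wijsman topology, confirms that the result is sharp in the sense that $\mathcal{F}^*_n(X)$ itself need not be closed.
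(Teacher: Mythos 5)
Your argument is correct, and it reaches the contradiction by a different (and somewhat more elementary) mechanism than the paper. Both proofs reduce closedness to sequences (legitimate, since the Wijsman topology on $\mathcal{F}^*(X)$ is Polish, hence metrizable) and both assume for contradiction that the limit $F$ contains $n+1$ distinct points with $d(y_i,F_k)\to d(y_i,F)=0$. From there the paper selects, for each $i$ and $k$, a nearest point of the finite set $F_k$ to $y_i$, applies the pigeonhole principle twice (once to stabilize the index of the nearest point along a subsequence, once to find two of the $n+1$ limit points sharing the same index), and concludes that a single sequence would converge to two distinct limits. You instead surround the $n+1$ points by pairwise disjoint open balls $U_i$ of radius $r_i$ and note that $d(y_i,F_k)<r_i$ forces a point of $F_k$ inside each $U_i$, so $\sharp F_k\ge n+1$ directly. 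Your route avoids the subsequence extraction entirely and is shorter; the paper's route is closer in spirit to the convergence bookkeeping it reuses elsewhere (e.g.\ in the proof of Proposition \ref{prop finite fibers Fsigma}). Your derivation of the Borel statement, $\mathcal{F}^*_n(X)=\mathcal{F}^*_{\le n}(X)\setminus \mathcal{F}^*_{\le n-1}(X)$ with $\mathcal{F}^*_{\le 0}(X)=\emptyset$, is exactly what the paper leaves implicit.
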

\begin{proof}
Let $(F_k)_{k=1}^{\infty}$ be a sequence in $\mathcal{F}_{\le n}^*(X)$ converging to $F\in \mathcal{F}^*(X)$. Write $F_k=\{x_1^{(k)},\dots,x_n^{(k)}
\}$ (we do not assume that $x_i^{(k)}$'s are different for different $i$). 
Assume by contradiction that there exist $n+1$ distinct points $x_1,\dots, x_{n+1}\in F$. By definition we have $d(x_i,F_k)\xrightarrow{k\to \infty}d(x_i,F)=0$. Fix $i\in \{1,\dots,n+1\}$. For each $k\in \N$, there exists $j(k,i)\in \{1,\dots,n\}$ such that $d(x_i,x_{j(k,i)}^{(k)})=d(x_i,F_k)$. Thus there exists $j(i)\in \{1,\dots, n\}$ such that $j(k,i)=j(i)$ for infinitely many $k$. By passing to a subsequence up to $n+1$ times, we may assume that $j(k,i)=j(i)$ for each $i\in \{1,\dots,n+1\}$ and $k\in \N$. 
Thus $\displaystyle \lim_{k\to \infty}d(x_i,x_{j(i)}^{(k)})=0$ for every $i\in \{1,\dots,n+1\}$. There exist at least two elements $i,i'\in \{1,\dots, n+1\}, i<i'$, such that $j(i)=j(i')$. Then the sequence $(x_{j(i)}^{(k)})_{k=1}^{\infty}$ converges to two points $x_i,x_{i'}$, a contradiction. Therefore $\sharp F\le n$. 
\end{proof}
\begin{remark}
The set $\mathcal{F}^*_n(X)=\{F\in \mathcal{F}^*(X)\mid \sharp F=n\}$ is not closed. For example, consider $X=[-1,1]$ with the Euclidean metric $d$, and set $F_n=\{\pm \tfrac{1}{n}\} (n\in \N),\,F=\{0\}$. Then for each $t\in [-1,1]$, we have 
$d(t,F_n)=\min \{|t-\tfrac{1}{n}|,|t+\tfrac{1}{n}|\}\xrightarrow{n\to \infty}|t|=d(t,F)$. Thus $F_n\xrightarrow{n\to \infty}F$, $\sharp F_n=2\,(n\in \N)$ and $\sharp F=1$. 
\end{remark}

\begin{proposition}\label{prop finite fibers Fsigma}
Let $H$ be a compact metrizable group acting continuously on a Polish space $X$. Let $f\colon X\to Y=H\backslash X$ be the quotient map. 
Consider the map $\iota_f\colon Y\ni y\mapsto f^{-1}(\{y\})\in \mathcal{F}^*(X)$. 
Then the following statements hold. 
\begin{list}{}{}
\item[{\rm (i)}] For each $y\in Y$, $\iota_f(y)$ is either finite or perfect.
\item[{\rm (ii)}] $\iota_f$ is continuous. Consequently, the set $\{y\in Y\mid \sharp \iota_f(y)\le n\}$ is closed for every $n\in \N$. In particular, each $Y_n=\{y\in Y\mid \sharp \iota_f(y)=n\}$ is Borel and the subspace $\{y\in Y\mid \sharp \iota_f(y)=\infty\}$ is $G_{\delta}$, whence Polish.  
\end{list}
\end{proposition}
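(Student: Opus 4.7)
The plan is to prove (i) by a homogeneity argument on orbits and (ii) by combining the openness of the quotient map $f$ with a compactness argument using $H$. The consequences will then follow by pulling back the closed set $\mathcal{F}^*_{\le n}(X)$ from Lemma \ref{lem: F_len is closed}.

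For (i), observe that $\iota_f(y)=f^{-1}(\{y\})$ is an $H$-orbit $H\cdot x_0$ for any lift $x_0$. This orbit is compact as the continuous image of $H$, and $H$ acts transitively on it. If some point of the orbit were isolated, then by transitivity every point would be isolated, so the orbit would be a discrete compact space, hence finite. Thus $\iota_f(y)$ is either finite or has no isolated points, i.e., is perfect.

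For (ii), I first note that $f$ is open: for any open $U\subset X$, $f^{-1}(f(U)) = H\cdot U = \bigcup_{h\in H} hU$ is open, hence so is $f(U)$. From this openness I would derive a lifting lemma: given $y_n\to y$ in $Y$ and any $x_0\in f^{-1}(y)$, one can choose lifts $x_n\in f^{-1}(y_n)$ with $x_n\to x_0$, by taking a decreasing neighborhood basis $(U_k)$ of $x_0$ and diagonalizing, using that each $f(U_k)$ is an open neighborhood of $y$. To show $\iota_f$ is continuous, fix a compatible metric $d$ on $X$ and $x\in X$; I need $\inf_{h\in H} d(x,hx_n)\to \inf_{h\in H} d(x,hx_0)$. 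The $\limsup$ bound follows by fixing any $h$ and using $d(x,hx_n)\to d(x,hx_0)$. The $\liminf$ bound is the main point: pick $h_n\in H$ with $d(x,h_n x_n)\le \inf_h d(x,hx_n)+1/n$, pass to a convergent subsequence $h_n\to h_\infty$ by compactness of $H$, and observe $d(x,h_n x_n)\to d(x,h_\infty x_0)\ge \inf_h d(x,hx_0)$. This compactness step is where the hypothesis on $H$ is essential, and it is the main obstacle in the proof.

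Finally, once $\iota_f$ is continuous, the set $\{y\in Y\mid \sharp\iota_f(y)\le n\} = \iota_f^{-1}(\mathcal{F}^*_{\le n}(X))$ is closed by Lemma \ref{lem: F_len is closed}. Hence $Y_n = \{y\mid \sharp\iota_f(y)\le n\}\setminus\{y\mid \sharp\iota_f(y)\le n-1\}$ is a difference of closed sets, in particular Borel, and $\{y\in Y\mid \sharp\iota_f(y)=\infty\}$ is the complement of the $F_\sigma$ union $\bigcup_n\{y\mid \sharp\iota_f(y)\le n\}$, hence $G_\delta$ in $Y$ and therefore Polish by Alexandrov's theorem.
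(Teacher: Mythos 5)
Your proof is correct, and it deviates from the paper's argument in two places worth noting. For (i), you case directly on whether the orbit has an isolated point, which cleanly yields the finite/perfect dichotomy; the paper instead splits into countable versus uncountable fibers and invokes the Baire category theorem to produce an isolated point in the countable case. Your version is a bit more economical, since for a nonempty compact set ``no isolated point'' already means perfect. For (ii), the shared skeleton is the same --- openness of $f$, lifting $y_n\to y$ to $x_n\to x_0$, and verifying Wijsman convergence $d(x',Hx_n)\to d(x',Hx_0)$ pointwise in $x'$ --- but the mechanism differs: the paper first averages the metric over $H$ (Lemma \ref{lem: invmetric}) to get an $H$-invariant $d$, rewrites $d(x',Hx_n)=d(x_n,Hx')$, and concludes from continuity of the distance to the fixed set $Hx'$; you work with an arbitrary compatible metric and prove the two inequalities directly, getting the $\limsup$ bound by testing a fixed $h$ and the $\liminf$ bound by extracting a convergent subsequence of near-minimizers $h_n$ in the compact group and using joint continuity of the action. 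Your route avoids constructing the invariant metric at the cost of a slightly more hands-on subsequence argument (which, to be fully rigorous, should be phrased as ``every subsequence has a further subsequence along which the claimed limit holds''); both isolate compactness of $H$ as the essential hypothesis. The derivation of the consequences from Lemma \ref{lem: F_len is closed} is identical in both proofs.
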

We will use the following well-known result. Suppose $G$ is a compact metrizable group acting on a Polish space $X$. Let ${\rm d}g$ be the normalized Haar measure on $G$.  
Let $d_0$ be a compatible complete metric on $X$ with diameter 1. Define $d\colon X\times X\to [0,1]$ by 
\[d(x,y):=\int_Gd_0(gx,gy)\,{\rm d}g,\ \ \ \ x,y\in X.\]
Then by standard arguments, one can show that $d$ is a compatible complete metric on $X$ which is $G$-invariant. We summarize this observation. 
\begin{lemma}\label{lem: invmetric}
Let $\alpha\colon G\curvearrowright X$ be a continuous action of a compact Polish group $G$ on a Polish space $X$. 
Then there exists a complete metric $d$ on $X$ compatible with the topology which is $G$-invariant, i.e., $d(gx,gy)=d(x,y)$ holds for every $x,y\in X$ and $g\in G$. 
\end{lemma}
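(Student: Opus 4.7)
The plan is to verify that the formula $d(x,y)=\int_G d_0(gx,gy)\,{\rm d}g$ defines a complete, topology-compatible, $G$-invariant metric on $X$, handling the four properties in turn. Since $d_0$ has diameter $1$ and the integrand is continuous in $g$, the integral is well defined and $d$ takes values in $[0,1]$. Symmetry and the triangle inequality pass from $d_0$ to $d$ by linearity and monotonicity of the integral; non-degeneracy follows because $g\mapsto d_0(gx,gy)$ is continuous and nonnegative, so if its integral vanishes then it vanishes identically, and evaluation at $g=e$ gives $d_0(x,y)=0$, hence $x=y$. $G$-invariance is immediate from translation invariance of the Haar measure: $d(hx,hy)=\int_G d_0(ghx,ghy)\,{\rm d}g=d(x,y)$.

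To check that $d$ and $d_0$ generate the same topology, the easy direction is that if $x_n\to x$ in $d_0$ then $K:=\{x\}\cup\{x_n:n\geq 1\}$ is compact, so the action restricted to $G\times K$ is uniformly continuous; hence $\sup_{g\in G}d_0(gx_n,gx)\to 0$, and $d(x_n,x)\to 0$ by bounded convergence. For the converse, suppose $d(x_n,x)\to 0$ while $d_0(x_{n_k},x)\geq \varepsilon$ along some subsequence. Then $g\mapsto d_0(gx_{n_k},gx)$ tends to zero in $L^1(G,{\rm d}g)$; passing to a further subsequence yields almost-everywhere pointwise convergence in $g$. Fixing any such $g_0$ and using continuity of $y\mapsto g_0^{-1}y$ gives $x_{n_{k_\ell}}\to x$ in $d_0$, contradicting $d_0(x_{n_k},x)\geq \varepsilon$.

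For completeness, given a $d$-Cauchy sequence $(x_n)$, I would extract a sparse subsequence $(x_{n_k})$ with $d(x_{n_k},x_{n_{k+1}})<2^{-k}$. By Tonelli,
\[
\int_G\sum_{k=1}^{\infty} d_0(gx_{n_k},gx_{n_{k+1}})\,{\rm d}g<\infty,
\]
so for almost every $g$, and in particular for some fixed $g_0\in G$, the sequence $(g_0 x_{n_k})$ is $d_0$-Cauchy in the complete metric space $(X,d_0)$ and converges to some $z\in X$. Applying the homeomorphism $g_0^{-1}$ yields $x_{n_k}\to g_0^{-1}z=:x$ in $d_0$; by the compatibility step this forces $d(x_{n_k},x)\to 0$, and since $(x_n)$ is $d$-Cauchy the full sequence converges to $x$ in $d$.

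The main obstacle running through the last two steps is that a small $d$-distance only controls $d_0(gx,gy)$ on average and therefore does not directly give smallness of $d_0(x,y)$. The recurring trick is to pass to a subsequence where the $L^1$ bound forces pointwise almost-everywhere convergence in $g$, select any good $g_0$, and transport $d_0$-convergence back to the identity via the continuous action of $g_0^{-1}$.
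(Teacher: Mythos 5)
Your proposal is correct and uses exactly the construction the paper has in mind: averaging a bounded compatible complete metric $d_0$ over the Haar measure of $G$ (the paper only states this formula and defers to ``standard arguments''). Your verification of the four properties --- in particular the $L^1$-to-almost-everywhere subsequence trick for transporting $d$-smallness back to $d_0$ via a single good group element, and the Tonelli argument for completeness --- correctly supplies the details the paper omits.
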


\begin{proof}[Proof of Proposition \ref{prop finite fibers Fsigma}]
(i) Take $y\in Y$. $\iota_f(y)$ is compact hence a Polish space. Assume first that $\iota_f(y)$ is countable. 
    Then by the Baire category theorem, there is an isolated point $x$ in $\iota_f(y)$. On the other hand, $H$ acts transitively on $\iota_f(y)$, which implies that each point in $\iota_f(y)$ is isolated, i.e., $\iota_f(y)$ is discrete. Since $\iota_f(y)$ is compact, this shows that $\iota_f(y)$ is finite. 
    Assume next that $\iota_f(y)$ is uncountable. If there is an isolated point in $\iota_f(y)$, then by the same argument as before $\iota_f(y)$ is discrete, which contradicts the separability of the Polish space $\iota_f(y)$. Thus $\iota_f(y)$ is perfect.\\
(ii)
Let $d$ be a metric on $X$ compatible with the topology which is $H$-invariant (Lemma \ref{lem: invmetric}).
Let $(y_n)_{n=1}^{\infty}$ be a sequence in $Y$ converging to $y\in Y$. 
Then there exists a sequence $(x_n)_{n=1}^{\infty}$ in $X$ converging to $x\in X$ such that $f(x_n)=y_n\,(n\in \N)$ and $f(x)=y$. Indeed, take any $x\in X$ such that $f(x)=y$. Since $X$ is metrizable, there exists a neighborhood basis $\{U_k\}_{k=1}^{\infty}$ of $x$ satisfying $U_k\supset U_{k+1}\,(k\in \N)$. 
Since $f(U_k)$ is an open neighborhood of $y$, we may find an increasing sequence $N_1<N_2<\dots$ of natural numbers such that 
$y_n\in f(U_k)$ for every $n\ge N_k$. Therefore for each $n\in \N$, there exists $x_n\in X$ such that $f(x_n)=y_n$ and $x_n\in U_k$ hold for every $n\ge N_k$. It is then clear that $\disp \lim_{n\to \infty}x_n=x$. 
We show that $\disp \lim_{n\to \infty}\iota_f(y_n)=\iota_f(y)$ in the Wijsman topology. Since $\iota_f(y_n)=Hx_n$ and $\iota_f(y)=Hx$, this amounts to show that $\disp \lim_{n\to \infty}d(x',Hx_n)=d(x',Hx)$ for every $x'\in X$. 
Fix $x'\in X$. Since $d$ is $H$-invariant, we have 
\begin{eqnarray*}
d(x',Hx_n)&=&\min_{h\in H}d(x',hx_n) \\ &=&\min_{h\in H}d(h^{-1}x',x_n)=d(x_n,Hx')\xrightarrow{n\to \infty}d(x,Hx')=d(x',Hx),
\end{eqnarray*}
where we used the fact that the map $d(\cdot,Hx')$ is continuous on $X$. This shows that $\iota_f$ is continuous. 
Then for each $n\in \N$, the set $\{y\in Y\mid \sharp \iota_f(y)\le n\}$ is the inverse image of the set $\mathcal{F}^*_{\le n}(X)$ (which is closed by Lemma \ref{lem: F_len is closed}) by the continuous map $\iota_f$, whence it is also closed. 
Therefore $\{y\in Y\mid \sharp \iota_f(y)<\infty\}=\bigcup_{n\in\N}\{y\in Y\mid \sharp \iota_f(y)\le n\}$ is $F_{\sigma}$ and its complement $\{y\in Y\mid \sharp \iota_f(y)=\infty\}$ is a $G_{\delta}$ subset of $Y$, whence Polish. 
\end{proof}
We will use Mauldin's Borel parametrization theorem \cite[Theorem A]{MauldinMR530052}.
\begin{definition}
Let $X,Y$ be Polish spaces, $B$ be a Borel subset of $X\times Y$. 
A {\it Borel parametrization} of $B$ is a Borel isomorphism $g\colon X\times E\to B$ such that $g(x,\cdot)$ is a Borel isomorphism of $E$ onto $B_x$, where $E$ is a Borel subset of $Y$. 
\end{definition}
If all $B_x$ are uncountable, then because any uncountable standard Borel spaces are Borel isomorphic, we may replace $E$ by $2^{\omega}$ in the definition of the Borel parametrization. 
\begin{theorem}[Mauldin \cite{MauldinMR530052}]\label{thm Mauldin}
Let $X,Y$ be Polish spaces, $B$ be a Borel subset of $X\times Y$ such that $B_x$ is uncountable for every $x\in X$. 
Then $B$ admits a Borel parametrization if and only if there exists a Borel subset $M$ of $B$ such that for every $x\in X$, $M_x$ is a nonempty compact perfect set.
\end{theorem}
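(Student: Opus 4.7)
The plan is to address the two implications separately. The interesting direction is sufficiency ($\Leftarrow$), which is the content that will be used in Theorem~\ref{thm main}; necessity is comparatively straightforward.

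For necessity, suppose $g \colon X \times E \to B$ is a Borel parametrization with $E$ Borel and uncountable (the latter because each $B_x$ is uncountable), so we may take $E = 2^\omega$. To manufacture a Borel $M \subset B$ with compact perfect fibres, run a fibrewise Lusin scheme: for the Borel map $g \colon X \times 2^\omega \to Y$, a measurable selection argument yields a Borel $P \subset X \times 2^\omega$ whose fibre $P_x \subset 2^\omega$ is a Cantor set and on which $g(x,\cdot)|_{P_x}$ is a topological embedding into $Y$. Setting $M := g(P)$ produces a Borel subset of $B$ (image of a Borel set under a Borel isomorphism) whose fibre $M_x$ is a continuous injective image of a Cantor set, hence compact and perfect in $Y$.

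For sufficiency, the construction has two stages: first parametrize $M$ itself, then extend to all of $B$. To parametrize $M$, build a Borel Cantor scheme $\{M_s\}_{s\in 2^{<\omega}}$ of Borel subsets of $M$ with $M_\emptyset = M$, with $M_s = M_{s0}\sqcup M_{s1}$ fibrewise, with each $M_{s,x}$ nonempty compact perfect, and with $\sup_x \mathrm{diam}(M_{s,x}) \to 0$ as $|s|\to\infty$ in a fixed compatible metric on $Y$. Since $M \subset X \times Y$ is Borel with compact fibres, the map $x \mapsto M_x$ into the Polish space of nonempty compact subsets of $Y$ under the Hausdorff metric is Borel, and similarly for each $M_s$. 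The splitting at each level is then a measurable selection problem on this space, soluble by Kuratowski--Ryll-Nardzewski once one observes that every compact perfect $K \subset Y$ admits points $p \in K$ and radii $r > 0$ for which both $K \cap \overline{B}(p,r)$ and $K \setminus B(p,r)$ are nonempty compact perfect, and that the set of admissible triples $(K,p,r)$ is Borel. With the scheme in hand, $F(x,\sigma) := \bigcap_n M_{\sigma|n,x}$ is a single point for every $(x,\sigma)$, and $F \colon X \times 2^\omega \to M' \subset M$ is a fibre-preserving Borel isomorphism onto its image.

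Finally, extend $F$ to a Borel parametrization of $B$ by a fibrewise Schr\"oder--Bernstein argument. Since $Y$ is an uncountable Polish space there is a Borel isomorphism $\phi \colon Y \to 2^\omega$, which gives a fibre-preserving Borel injection $B \hookrightarrow X \times 2^\omega$ via $(x,y)\mapsto(x,\phi(y))$, while $F$ gives a fibre-preserving Borel injection $X \times 2^\omega \cong M' \hookrightarrow B$. Executing the classical Schr\"oder--Bernstein construction uniformly in the parameter $x$ (iterating the two injections and taking countable Borel unions, all of which remain Borel in $X \times Y$) produces the desired fibre-preserving Borel isomorphism $g \colon X \times 2^\omega \to B$. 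The main obstacle is the Borel splitting step of the Cantor scheme: verifying that a compact perfect subset of $Y$ can be partitioned into two disjoint nonempty compact perfect pieces of prescribed small diameter with the partition depending in a Borel fashion on the set. Once that is in place the remaining steps are essentially routine descriptive set theory.
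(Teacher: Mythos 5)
First, a remark on the comparison itself: the paper does not prove this statement. It is imported verbatim as Theorem~A of Mauldin's paper \cite{MauldinMR530052} and used as a black box (only the ``if'' direction is actually invoked, in Proposition~\ref{prop borel parametrization for compact}, with $M=B_\infty$ itself). So there is no proof in the paper to measure your argument against, and your proposal has to stand on its own.

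Your architecture for sufficiency --- a uniformly Borel Cantor scheme inside $M$ to get a fibre-preserving Borel injection $X\times 2^\omega\hookrightarrow B$, followed by a fibrewise Borel Schr\"oder--Bernstein against $(x,y)\mapsto(x,\phi(y))$ --- is the standard and correct route, and the Schr\"oder--Bernstein step is unproblematic. But the splitting step, which you yourself identify as the main obstacle, is stated incorrectly in two ways. First, the requirement $M_s=M_{s0}\sqcup M_{s1}$ fibrewise is impossible: a connected fibre such as $M_{s,x}=[0,1]$ cannot be written as a disjoint union of two nonempty compact sets. You must only \emph{select} two disjoint nonempty compact perfect subsets $M_{s0,x},M_{s1,x}\subset M_{s,x}$, not partition; this suffices, since Schr\"oder--Bernstein only needs an injection, and it is consistent with your later $M'\subsetneq M$, but your final paragraph again demands a partition. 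Second, the concrete split $K\cap\overline{B}(p,r)$ versus $K\setminus B(p,r)$ fails: the two pieces overlap on $K\cap\partial B(p,r)$, and $K\setminus B(p,r)$ can have isolated points on that sphere. A correct choice is to take two points $p_0\neq p_1$ of $K$ with disjoint closed balls $\overline{B}(p_i,r)$ and use the closures \emph{in $K$} of $K\cap B(p_i,r)$; the closure of a nonempty relatively open subset of a perfect set is perfect, these pieces are disjoint and can be made of diameter $\le 2^{-n}$, and the admissible tuples $(K,p_0,p_1,r)$ form a Borel subset of $\mathcal{K}(Y)\times Y\times Y\times(0,\infty)$ to which Kuratowski--Ryll-Nardzewski applies. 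Finally, your necessity argument rests entirely on the assertion that ``a measurable selection argument yields a Borel $P\subset X\times 2^\omega$ whose fibre $P_x$ is a Cantor set on which $g(x,\cdot)$ is a topological embedding.'' That uniform-in-$x$ Lusin-type statement is the whole content of the necessity direction --- note that the analogous statement with $B$ in place of $g(X\times 2^\omega)$ is \emph{false} for general Borel sets with uncountable sections, which is precisely why Mauldin's theorem is a nontrivial equivalence --- so it cannot simply be asserted; it needs a genuine proof (e.g.\ a change of topology on $X\times 2^\omega$ making $g$ continuous, combined with a parametrized perfect-set construction).
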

\begin{proposition}\label{prop borel parametrization for compact}
Let $H$ be a compact metrizable group acting continuously on a Polish space $X$. Denote by $f\colon X\to Y=H\backslash X$ the quotient map. 
Define Borel sets $X_n=f^{-1}(Y_n), X_{\infty}=f^{-1}(Y_{\infty})$, where $Y_n=\{y\in Y\mid \sharp \iota_f(y)=n\}\,(n\in \N)$ and $Y_{\infty}=\{y\in Y\mid \sharp \iota_f(y)=\infty\}$. 
Then there exist Borel isomorphisms $g_n\colon Y_n\times \{1,\dots,n\}\to X_n\,(n\in \N)$ and $g_{\infty}\colon Y_{\infty}\times 2^{\omega}\to X_{\infty}$ such that 
for each $n\in \N$ and $y\in Y_n$, $g_n(y,\cdot)$ is a Borel isomoprhism of $\{1,\dots,n\}$ onto $f^{-1}(\{y\})$, and for each $y\in Y_{\infty}$, $g_{\infty}(y,\cdot)$ is a Borel isomorphism of $2^{\omega}$ onto $f^{-1}(\{y\})$. 
\end{proposition}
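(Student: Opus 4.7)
The plan is to treat $Y_n$ (for each finite $n$) and $Y_\infty$ separately, using different parametrization theorems in each regime; Proposition \ref{prop finite fibers Fsigma} already supplies exactly the structural information that each requires.

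For finite $n$, I would apply the Lusin--Novikov uniformization theorem to the restricted quotient map $f|_{X_n}\colon X_n \to Y_n$. Its graph is a Borel subset of $Y_n \times X$ whose vertical sections have cardinality exactly $n$, so Lusin--Novikov decomposes this graph into a countable family of graphs of Borel partial functions $h_k\colon Y_n \rightharpoonup X$. For each $y \in Y_n$, exactly $n$ of the $h_k$ are defined at $y$, so the reindexing rule ``let $\sigma_i(y) = h_{k_i(y)}(y)$, where $k_i(y)$ is the $i$-th index $k$ with $y \in \mathrm{dom}(h_k)$'' produces Borel total sections $\sigma_1,\dots,\sigma_n\colon Y_n \to X_n$ with pairwise disjoint graphs whose union is $X_n$. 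Setting $g_n(y,i) := \sigma_i(y)$ gives a Borel bijection, and Souslin's theorem makes the inverse automatically Borel.

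For $Y_\infty$, I would apply Mauldin's theorem (Theorem \ref{thm Mauldin}) with the set $B := \{(y,x) \in Y_\infty \times X : f(x) = y\}$, which is the restriction of the (closed) graph of $f$ and hence Borel in $Y_\infty \times X$. Proposition \ref{prop finite fibers Fsigma}(i) guarantees that each fiber $B_y = f^{-1}(\{y\})$ is compact and perfect, so taking $M := B$ itself verifies the hypothesis. Mauldin's conclusion is a Borel isomorphism $g\colon Y_\infty \times E \to B$ for some Borel $E$, restricting to a Borel isomorphism of $E$ onto each uncountable $B_y$; hence $E$ is uncountable, and replacing it with $2^\omega$ via a Borel isomorphism and composing with the Borel projection $B \to X_\infty$, $(y,x) \mapsto x$, yields the required $g_\infty$.

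The substantive obstacle is verifying the hypothesis of Mauldin's theorem, specifically that the fibers are perfect in the uncountable case; but this is already the content of Proposition \ref{prop finite fibers Fsigma}(i), where the Baire category theorem together with the transitivity of $H$ on each fiber rules out isolated points once the fiber is uncountable. Beyond that input, both halves of the argument are essentially bookkeeping around two black-box parametrization theorems, so I expect no further technical difficulty.
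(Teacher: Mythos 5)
Your proof is correct. The $Y_\infty$ half is exactly the paper's argument: the same Borel set $B_\infty=\{(f(x),x)\mid x\in X_\infty\}$, the same appeal to Proposition \ref{prop finite fibers Fsigma}(i) for perfectness of the fibers, $M=B_\infty$, and Mauldin's theorem. The finite case is where you diverge. The paper retopologizes $X_n$ and $Y_n$ (via \cite[Theorem 13.11]{kechrisMR1321597}) so that $f|_{X_n}$ becomes continuous, shows $\iota_{f_n}$ is Borel (itself a corollary of Lusin--Novikov), applies the Kuratowski--Ryll-Nardzewski selector to peel one Borel section off each fiber, and iterates this $n$ times. You instead apply Lusin--Novikov directly to the graph of $f|_{X_n}$ and reindex the resulting countable family of partial sections into exactly $n$ total ones. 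Your route is arguably cleaner: it stays entirely in the Borel category and avoids both the change of topology and the selector theorem. The one step you should make explicit is that the countable family of Borel graphs produced by Lusin--Novikov must first be disjointified (replace $B_k$ by $B_k\setminus\bigcup_{j<k}B_j$, still a Borel graph) before you can assert that exactly $n$ of the $h_k$ are defined at each $y\in Y_n$; without disjointness, more than $n$ of them could be defined at $y$ with repeated values. With that standard adjustment, the reindexed sections $\sigma_1,\dots,\sigma_n$ are Borel with disjoint graphs covering $X_n$, and Lusin--Souslin gives Borelness of the inverse, as you say.
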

\begin{proof}
By Proposition \ref{prop finite fibers Fsigma}, the sets $X_n,Y_n\,(n\in \N)$ are Borel, $X_{\infty}$ (resp. $Y_{\infty})$ are $G_{\delta}$ hence a Polish subspace of $X$ (resp. $Y$). We may assume that all $X_n,Y_n,X_{\infty},Y_{\infty}$ are all nonempty. 
First, we construct a Borel isomoprhism $g_n\colon Y_n\times \{1,\dots,n\}\to X_n$. For $n=1$, $g_1(y,1)$ is the unique point in $f^{-1}(\{y\})$ and it is Borel because the map $f|_{X_1}$ is a Borel isomorphism with inverse $g_1(\cdot,1)$. Assume $n\ge 2$. Choose a Polish topology on $Y_n$ whose Borel structure coincides with the subspace Borel structure on $Y_n$. By \cite[Theorem 13.11]{kechrisMR1321597}, there exists a Polish topology on $X_n$ with the same Borel structure as its subspace Borel structure such that $f_n=f|_{X_n}\colon X_n\to Y_n$ is continuous. By Theorem \ref{thm Borel selector}, there exists a Borel map $\sigma_n\colon \mathcal{F}^*(X_n)\to X_n$ such that $\sigma_n(F)\in F$ for every $F\in \mathcal{F}^*(X_n)$. Since $f_n$ is continuous and countable-to-1, $\iota_{f_n}$ is Borel. Indeed, this is a standard corollary of the Lusin--Novikov uniformization theorem, see \cite[Theorem 18.10]{kechrisMR1321597}).

Note that the composition $\sigma_n\circ \iota_{f_n}\colon Y_n\to X_n$ is an injective Borel map. Indeed if $y,y'\in Y_n$ satisfies $\sigma_n(\iota_{f_n}(y))=\sigma_{n}(\iota_{f_n}(y'))$, then this element belongs to $f^{-1}(\{y\})\cap f^{-1}(\{y'\})$, which implies that $y=y'$. Therefore $\sigma_n(\iota_{f_n}(Y_n))$ is Borel in $X_n$. 
Then the map $Y_n\ni y\mapsto \sigma_n(\iota_{f_n}(y))\in X_{n,n}=\sigma_n(\iota_{f_n}(Y_n))$ is a Borel isomorphism, which we call $g_n(\cdot,n)$.  
Set $X_n'=X_n\setminus X_{n,n}$ and $f_n'=f_n|_{X_n'}\colon X_n'\to Y_n$. Then $f_n'$ is a Borel surjection with $\sharp \iota_{f_n'}(y)=n-1$ for every $y\in Y_n$. 
By applying the same argument repeatedly $n-1$ times, we obtain a Borel partition $X_{n,1}\sqcup X_{n,2}\sqcup \cdots \sqcup X_{n,n-1}=X_n'$ and Borel isomorphisms $g_n(\cdot,k)\colon Y_n\to X_{n,k}\,(k=1,\dots,n-1)$. Thus we may define a Borel isomorphism $g_n\colon Y_n\times \{1,\dots,n\}\to X_n$ such that $g_n(y,\cdot)\colon \{1,\dots,n\}$ is a Borel isomorphism of $\{1,\dots, n\}$ onto $f^{-1}(\{y\})$.\\
Next, We view $X_{\infty}$ and $Y_{\infty}$ as Polish in their subspace topologies. Let $B_{\infty}=\{(f(x),x)\mid x\in X_{\infty}\}$, which is the image of the injective Borel map $X_{\infty}\ni x\mapsto (f(x),x)\in Y_{\infty}\times X_{\infty}$, whence a Borel subset of $Y_{\infty}\times X_{\infty}$. Let $y\in Y_{\infty}$. By Proposition \ref{prop finite fibers Fsigma} (i), $(B_{\infty})_y=f^{-1}(\{y\})\subset X_{\infty}$ is perfect and compact in $X$, whence it is perfect and compact in $X_{\infty}$ as well. By Theorem \ref{thm Mauldin}, there exists a Borel isomorphism $g_{\infty}\colon Y_{\infty}\times 2^{\omega}\to B_{\infty}$ such that $g_{\infty}(y,\cdot)$ is a Borel isomoprhism of $2^{\omega}$ onto $(B_{\infty})_y=f^{-1}(\{y\})$ for every $y\in Y_{\infty}$.   
\end{proof}

Now we are ready to prove that a common Borel transversal for $H\le G$ exists. 

\begin{proof}[Proof of Theorem \ref{thm main}]
 By Lemma \ref{lem borel transverssal for doublecoset}, there exists A Borel subset $A$ of $G$ which is a set of representatives for the double $H$-cosets. 
Thus the restriction of the quotient map $\pi\colon G\ni x\mapsto HxH\in H\backslash G/H$ to $A$ is a Borel isomorphism. Let $X=G/H, Y=H\backslash G$ and $Z=H\backslash G/H$, which are Polish because so is $G$ and $H$ is compact. Let $f\colon X\to Z$ and $g\colon Y\to Z$ be the quotient maps. 
{By the index condition for $H\le G$, we have $\sharp f^{-1}(\{z\})=\sharp g^{-1}(\{z\})$ for each $z\in Z$. Thus as in Proposition \ref{prop borel parametrization for compact}, We define $Z_n=\{z\in Z\mid \sharp \iota_f(z)=n\}, X_n=f^{-1}(Z_n), Y_n=g^{-1}(Z_n)$ for each $n\in \N$. We also define $Z_{\infty}=\{z\in Z\mid \sharp \iota_f(z)=\infty\},\,X_{\infty}=f^{-1}(Z_{\infty})$ and $Y_{\infty}=g^{-1}(Z_{\infty})$.}  
Let $x\in A$. 
We also define $A_n=A\cap \pi^{-1}(Z_n)\,(n\in \N)$ and $A_{\infty}=A\cap \pi^{-1}(Z_{\infty})$. All of these sets are Borel.  
First, we consider the case $x\in A_{\infty}$. By Proposition \ref{prop borel parametrization for compact}, there exists a Borel isomorphism $\varphi_{\infty}\colon Z_{\infty}\times 2^{\omega}\to X_{\infty}$ 
(resp. $\psi_{\infty}\colon Z_{\infty}\times 2^{\omega}\to Y_{\infty}$) such that $\varphi_{\infty}(z,\cdot)\colon 2^{\omega}\to f^{-1}(\{z\})$ (resp. $\psi_{\infty}(z,\cdot)\colon 2^{\omega}\to g^{-1}(\{z\})$) is a Borel isomorphism for every $z\in Z_{\infty}$. 
Choose $\tilde{h}(x,\alpha)\in H$ (resp. $\tilde{k}(x,\alpha)\in H)$ such that $\varphi_{\infty}(\pi(x),\alpha)=\tilde{h}(x,\alpha)xH$ (resp. $\psi_{\infty}(\pi(x),\alpha)=Hx\tilde{k}(x,\alpha)$). Observe that because there are $\sharp (H\cap xHx^{-1})$ (resp. $\sharp (H\cap x^{-1}Hx$)) many choices for such $\tilde{h}(x,\alpha)$ (resp. $\tilde{k}(x,\alpha)$), it is unclear whether the maps $\tilde{h},\tilde{k}\colon A_{\infty}\times 2^{\omega}\to H$ are Borel. We resolve this issue as follows. For each $x\in A_{\infty}$, let $\hat{h}(x,\alpha)$ be the image of $\tilde{h}(x,\alpha)$ in the quotient space $H/(H\cap xHx^{-1})$. 
Then $\hat{h}(x,\alpha)$ is independent of the choice of $\tilde{h}(x,\alpha)$. Since each point in $H/(H\cap xHx^{-1})$ is a closed subset of $H$, we view 
\[\hat{h}(x,\alpha)=\tilde{h}(x,\alpha)(H\cap xHx^{-1})\in \mathcal{F}^*(H).\]
Then we show that the map $\hat{h}\colon A_{\infty}\times 2^{\omega}\to \mathcal{F}^*(H)$ is Borel. Let $U$ be a nonempty open subset of $H$ and let $\tilde{U}=\{F\in \mathcal{F}^*(H)\mid F\cap U\neq \emptyset\}$. 
Then $\hat{h}^{-1}(\tilde{U})=\{(x,\alpha)\in A_{\infty}\times 2^{\omega}\mid \hat{h}(x,\alpha)\cap U\neq \emptyset\}$, and 
\eqa{
    \hat{h}(x,\alpha)\cap U\neq \emptyset & \iff \tilde{h}(x,\alpha)(H\cap xHx^{-1})\cap U\neq \emptyset\\
    &\iff \tilde{h}(x,\alpha)\in U(H\cap xHx^{-1})\\
    &\stackrel{(*)}{\iff}\varphi_{\infty}(\pi(x),\alpha)\in U[xH]:=\{uxH\mid u\in U\}\subset \mathcal{F}^*(G).
}
To see $(*)$, suppose $\tilde{h}(x,\alpha)\in U(H\cap xHx^{-1})$. Then 
$\tilde{h}(x,\alpha)=u(x,\alpha)s(x,\alpha)$ 
for some $u(x,\alpha)\in U$ and $s(x,\alpha)\in H\cap xHx^{-1}$. Thus 
$\varphi_{\infty}(\pi(x),\alpha)=\tilde{h}(x,\alpha)xH=u(x,\alpha)s(x,\alpha)xH=u(x,\alpha)xH\in U[xH]$
 because 
 \[(u(x,\alpha)s(x,\alpha))^{-1}u(x,\alpha)=s(x,\alpha)^{-1}\in H\cap xHx^{-1}.\]
 Conversely, if $\varphi_{\infty}(\pi(x),\alpha)\in U[xH]$, then there exists $u(x,\alpha)\in U$ such that $\tilde{h}(x,\alpha)xH=u(x,\alpha)xH$, which implies that 
 $u(x,\alpha)^{-1}\tilde{h}(x,\alpha)\in H\cap xHx^{-1}$, or equivalently 
 $\tilde{h}(x,\alpha)\in U(H\cap xHx^{-1})$. 
 Thus 
$\hat{h}^{-1}(\tilde{U})=\{(x,\alpha)\in A_{\infty}\times 2^{\omega}\mid \varphi_{\infty}(\pi(x),\alpha)\in U[xH]\}$, and we are going to show that it is Borel. 

{Fix a complete metric $d$ on $G$ compatible with the topology, with respect to which we consider the Hausdorff metric $\delta$ on the space $\mathcal{K}^*(G)\subset \mathcal{F}^*(G)$. 
By Lemma \ref{lem Wijsman=Vietoris}, the metric topology of $\delta$ is compatible with the restriction of the Effros Borel structure on $\mathcal{F}^*(G)$ to $\mathcal{K}^*(G)$}.

Note that $\hat{h}^{-1}(\tilde{U})$ coincides with 
\[\widehat{U}:=\{(x,\alpha)\in A_{\infty}\times 2^{\omega}\mid H\in x^{-1}U^{-1}[\varphi_{\infty}(\pi(x),\alpha)]\},\]
where $x^{-1}U^{-1}[\varphi_{\infty}(\pi(x),\alpha)]:=\{x^{-1}u^{-1}\varphi_{\infty}(\pi(x),\alpha)\}\subset \mathcal{F}^*(G)$. 
Since $U$ is a nonempty open set in a metrizable space $H$, it is $F_{\sigma}$, thus $U=\bigcup_{i=1}^{\infty}F_i$ for some $F_1,F_2,\dots\in \mathcal{F}^*(H)$. Then $\widehat{U}=\bigcup_{i=1}^{\infty}\widehat{F_i}$, 
whence it suffices to show that $\hat{h}^{-1}(F)$ is Borel for every closed set $F$. Fix $F\in \mathcal{F}^*(H)$. Then $F$ is separable, so we may choose a countable dense subset $\{k_n\mid n\in \N\}$ of $F$. 
To show that $\hat{h}^{-1}(F)$ is Borel, we first observe that
\[H\in x^{-1}F^{-1}[\varphi_{\infty}(\pi(x),\alpha)]\iff \inf_{n\in \N}\delta(H,x^{-1}k_n^{-1}\varphi_{\infty}(\pi(x),\alpha))=0.\]

{
Indeed, if $H=x^{-1}k^{-1}\varphi_{\infty}(\pi(x),\alpha)$ for some $k\in F$, there exists a sequence $(k_{n_i})_{i=1}^{\infty}$ such that $d(k_{n_i},k)\xrightarrow{i\to \infty}0.$ Then $\delta(H,x^{-1}k_{n_i}^{-1}\varphi_{\infty}(\pi(x),\alpha))$ is equal to the maximum of
\eqa{
    \max_{g\in \varphi_{\infty}(\pi(x),\alpha)}d(x^{-1}k^{-1}g,x^{-1}k_{n_i}^{-1}\varphi_{\infty}(\pi(x),\alpha))
}
and
\eqa{
    \max_{g\in \varphi_{\infty}(\pi(x),\alpha)}d(x^{-1}k_{n_i}^{-1}g,x^{-1}k^{-1}\varphi_{\infty}(\pi(x),\alpha)).
}
The first part can be estimated from above by
\eqa{
    \max_{g\in \varphi_{\infty}(\pi(x),\alpha)}
    d(x^{-1}k^{-1}g,x^{-1}k_{n_i}^{-1}g)
    \xrightarrow{i\to \infty}0
}
 because $\varphi_{\infty}(\pi(x),\alpha)$ is compact, whence $x^{-1}k_{n_i}^{-1}g$ converges to $x^{-1}k^{-1}g$ uniformly on $g\in \varphi_{\infty}(\pi(x),\alpha)$.
}

Likewise, 
\[\lim_{i\to \infty}\max_{g\in \varphi_{\infty}(\pi(x),\alpha)}d(x^{-1}k_{n_i}^{-1}g,x^{-1}k^{-1}\varphi_{\infty}(\pi(x),\alpha))=0.\]
Therefore $\disp \inf_{n\in \N}\delta(H,x^{-1}k_n^{-1}\varphi_{\infty}(\pi(x),\alpha))=0$ holds.\\
Conversely, assume $\inf_{n\in \N}\delta(H,x^{-1}k_n^{-1}\varphi_{\infty}(\pi(x),\alpha))=0$. Then there exists $(k_{n_i})_{i=1}^{\infty}$ such that $\disp \lim_{i\to \infty}\delta(H,x^{-1}k_{n_i}^{-1}\varphi_{\infty}(\pi(x),\alpha))=0$. 
Since $F$ is compact, by passing to a subsequence we may assume that the limit $\disp k=\lim_{i\to \infty}k_{n_i}\in F$ exists. 
Then by the same argument as above, the sequence $$(x^{-1}k_{n_i}^{-1}\varphi_{\infty}(\pi(x),\alpha))_{i=1}^{\infty}$$ in {$\mathcal{K}^*(G)$} is $\delta$-convergent to both $H$ and $x^{-1}k^{-1}\varphi_{\infty}(\pi(x),\alpha)$, whence by the Hausdorff property, $H=x^{-1}k^{-1}\varphi_{\infty}(\pi(x),\alpha)\in x^{-1}F^{-1}[\varphi_{\infty}(\pi(x),\alpha)]$ holds. Since {$\varphi_{\infty}(\pi(\cdot),\cdot)\colon A_{\infty}\times 2^{\omega}\to X_{\infty}\subset \mathcal{K}^*(G)\subset \mathcal{F}^*(G)$} is Borel and {$\delta(H,\cdot)\colon \mathcal{K}^*(G)\to \R$} is Borel, it follows that $\hat{h}^{-1}(F)$ is Borel. 

This shows that $\hat{h}\colon A_{\infty}\times 2^{\omega}\to \mathcal{F}^*(H)$ is Borel. 
By Theorem \ref{thm Borel selector}, there exists a Borel map $\sigma_H\colon \mathcal{F}^*(H)\to H$ such that $\sigma_H(F)\in F$ for every $F\in \mathcal{F}^*(H)$. Then define $h=\sigma_H\circ \hat{h}\colon A_{\infty}\times 2^{\omega}\to H$, which is Borel. 
Then by construction, for each $(x,\alpha)\in A_{\infty}\times 2^{\omega}$, the class of $h(x,\alpha)$ in $H/(H\cap xHx^{-1})$ is $\hat{h}(x,\alpha)$, which is also the class of $\tilde{h}(x,\alpha)$ by definition. Therefore it follows that 
\[h(x,\alpha)xH=\tilde{h}(x,\alpha)xH=\varphi_{\infty}(\pi(x),\alpha).\]
By the same argument, we may find a Borel map $k\colon A_{\infty}\times 2^{\omega}\to H$ such that 
\[xHk(x,\alpha)=\psi_{\infty}(\pi(x),\alpha).\]
Then $T_x=\{h(x,\alpha)\mid \alpha\in 2^{\omega}\}$ (resp. $S_x=\{k(x,\alpha)\mid \alpha\in 2^{\omega}\}$) is a transversal for 
$H\to H/(H\cap xHx^{-1})$ (resp. $H\to (H\cap x^{-1}Hx)\backslash H$). Thus the map $A_{\infty}\times 2^{\omega}\ni (x,\alpha)\mapsto h(x,\alpha)xk(x,\alpha)\in G$ is an injective Borel map, whence $T_{\infty}=\{h(x,\alpha)xk(x,\alpha)\mid \alpha\in 2^{\omega},x\in A_{\infty}\}$ is a Borel subset of $G$. Moreover, it is straightforward to see that 
\[\pi^{-1}(Z_{\infty})=\bigsqcup_{g\in T_{\infty}}Hg=\bigsqcup_{g\in T_{\infty}}gH.\]
Next, we consider the case $x\in A_n$. 
Since the arugments are essentially identical to the $A_{\infty}$ case, we describe the argument briefly. By Proposition \ref{prop borel parametrization for compact}, there exists a Borel isomorphism $\varphi_n\colon Z_n\times \{1,\dots,n\}\to X_n$ (resp. $\psi_n\colon Z_n\times \{1,\dots,n\}\to Y_n$) such that $\varphi_n(z,\cdot)\colon \{1,\dots,n\}\to f^{-1}(\{z\})$ (resp. $\psi_n(z,\cdot)\colon \{1,\dots,n\}\to g^{-1}(\{z\})$) is a Borel isomorphism for every $z\in Z_n$. Then arguing as in the $A_{\infty}$ cas, we may find Borel maps $h_i\colon A_n\to H$ (resp. $k_i\colon A_n\to H$), $i=1,\dots, n$, such that 
\[\varphi_n(\pi(x),i)=h_i(x)xH,\,\, \psi_n(\pi(x),i)=Hxk_i(x),\,x\in A_n,\,\,i=1,\dots,n.\]
Then $T_x=\{h_1(x),\dots,h_n(x)\}$ (resp. $S_x=\{k_1(x),\dots,k_n(x)\}$) is a transversal for $H\to H/(H\cap xHx^{-1})$ (resp. $H\to (H\cap x^{-1}Hx)\backslash H$). Thus the map $A_n\times \{1,\dots,n\}\ni (x,i)\mapsto h_i(x)xk_i(x)\in G$ is an injective Borel map, whence $T_n=\{h_i(x)xk_i(x)\mid i=1,\dots,n,x\in A_n\}$ is a Borel subset of $G$. Moreover, it is straightforward to see that 
\[\pi^{-1}(Z_n)=\bigsqcup_{g\in T_n}Hg=\bigsqcup_{g\in T_n}gH.\]
Therefore, $T=T_{\infty}\sqcup \bigsqcup_{n\in \N}T_n$ is a Borel set satisfying $G=\bigsqcup_{g\in T}Hg=\bigsqcup_{g\in T}gH$. 

\end{proof}

\section{Examples and applications}

Let now $G$ be a countable discrete group. We say that a group is maximally almost periodic (MAP) if the natural homomorphism from $G$ to its Bohr compactification $bG$ is injective, i.e., if and only if we can embed $G$ into a compact group. A subgroup $H \leq G$ is said to be Bohr closed if $\bar{H} \cap G = H$, where $\bar H$ denotes the closure of $H$ in $bG.$ Note that there is a continuous homomorphism from $bG$ to the pro-finite completion of $G$. In particular, every pro-finitely closed subgroup is also Bohr closed. Moreover, it is obvious that the intersection of Bohr closed subgroups is again Bohr closed. Note that the Bohr compactification is typically not metrizable. However, for the purposes of our arguments in this section, we always need  to separate only countably many elements at a time, so that we may pass to a metrizable quotient of $bG$ and apply our previous arguments there. 

\begin{lemma} \label{lem index}
Let $G$ be a countable, discrete group and let $K \leq H$ be Bohr closed subgroups. Then, $[H:K] = [\bar H: \bar K]$ if $[H:K]$ is finite and $[H :K] = \omega$ if and only if 
$[\bar H : \bar L]$ is infinite (necessarily uncountable). 
\end{lemma}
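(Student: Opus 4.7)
The plan is to set up the natural map $\iota \colon H/K \to \bar H/\bar K$ given by $hK \mapsto h\bar K$ and exploit two ingredients: Bohr-closedness of $K$ (which says $\bar K \cap G = K$) and Baire category on the compact Hausdorff space $\bar H$. Throughout, $\bar H$ and $\bar K$ are closed subgroups of the compact group $bG$, so $\bar H$ is compact and $\bar K$ is a closed subgroup of $\bar H$.

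First I would verify that $\iota$ is injective. If $h_1 \bar K = h_2 \bar K$ with $h_1, h_2 \in H$, then $h_2^{-1} h_1 \in \bar K$; but $h_2^{-1} h_1 \in G$, and $\bar K \cap G = K$ by Bohr-closedness, so $h_1 K = h_2 K$. This already gives $|H/K| \le |\bar H/\bar K|$. Next I would handle the finite-index case: suppose $[H\colon K] = n$ with $H = \bigsqcup_{i=1}^n h_i K$, each $h_i \in H$. Taking the closure in $bG$ and using that finite union commutes with closure, together with the fact that left multiplication by any $h_i \in bG$ is a self-homeomorphism (so $\overline{h_i K} = h_i \bar K$), one gets $\bar H = \overline{H} = \bigcup_{i=1}^n h_i \bar K$. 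Disjointness of this union follows from the same injectivity argument, so $[\bar H\colon \bar K] = n$.

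The ``iff'' part of the second assertion is then formal. If $[H\colon K] = \omega$, then $[\bar H\colon \bar K] \ge \omega$ by injectivity of $\iota$, hence infinite. Conversely, if $[\bar H\colon \bar K]$ is infinite, $[H\colon K]$ cannot be finite, for otherwise the previous paragraph would force $[\bar H\colon \bar K]$ to be finite as well.

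The only nontrivial point remaining is the parenthetical: an infinite index $[\bar H\colon \bar K]$ is necessarily uncountable. Here I would invoke the Baire category theorem. The compact Hausdorff space $\bar H$ is Baire; if $\bar H = \bigsqcup_{n \in \N} g_n \bar K$ were a countable partition into closed cosets, some $g_n \bar K$ would have nonempty interior in $\bar H$, whence by translation $\bar K$ would be open in $\bar H$. But an open subgroup of a compact group has finite index, contradicting infinitude of the index. Thus an infinite index must in fact be uncountable. This Baire step is the only nontrivial ingredient of the proof, and it is worth noting that the argument nowhere requires $bG$ to be metrizable, so the remark on metrizable quotients preceding the lemma is not needed here.
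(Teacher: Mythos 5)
Your proof is correct and follows essentially the same route as the paper: injectivity of $hK \mapsto h\bar K$ via Bohr-closedness, and closedness of a finite union of cosets to get equality in the finite-index case. The only difference is that you spell out the Baire category argument for why an infinite index $[\bar H:\bar K]$ is uncountable, a standard fact the paper merely asserts.
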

\begin{proof}
We write $H = \sqcup_{t \in T} tK$. If $tK \neq t'K$, then $t\bar K \neq t'\bar K,$ since otherwise $t^{-1}t' \in \bar K \cap G = K$ as $K$ is closed. We conclude that $\bar H \supset \sqcup_{t \in T} t \bar K$ with equality if $T$ is finite since the right-hand side is already closed. This proves the first part of the claim. The second part follows since the value $[\bar H : \bar K]$ is always uncountable whenever it is infinite.
This finishes the proof.
\end{proof}

\begin{theorem} \label{bohrtrans} Let $G$ be a countable, discrete group and let $H \leq G$ be a Bohr closed subgroup. Then there exists a common transversal for $H$.
\end{theorem}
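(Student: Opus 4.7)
The plan is to verify Ore's index condition $[H : H \cap xHx^{-1}] = [H : H \cap x^{-1}Hx]$ for every $x \in G$; this is the condition (cited in the introduction) that characterizes the existence of a set-theoretic common transversal. Fix $x \in G$ and set $K := H \cap xHx^{-1}$, $K' := H \cap x^{-1}Hx$. Inner conjugation by $x \in G$ extends to a continuous automorphism of $bG$, so the subgroups $xHx^{-1}$ and $x^{-1}Hx$ are Bohr closed whenever $H$ is, and hence so are $K$ and $K'$ (intersections of Bohr closed subgroups are Bohr closed). Passing to closures in $bG$, the extended inner automorphism carries the pair $(\bar H \cap x^{-1}\bar H x, \bar{K'})$ onto $(\bar H \cap x\bar H x^{-1}, \bar K)$, since $xK'x^{-1}=K$.

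By Lemma \ref{lem index} applied to the inclusions $K \le H$ and $K' \le H$, it suffices to show $[\bar H : \bar K] = [\bar H : \bar{K'}]$ in the dichotomy sense (simultaneously finite and equal, or simultaneously infinite). The main input is the index condition for the compact subgroup $\bar H \le bG$, namely
\[
[\bar H : \bar H \cap x\bar H x^{-1}] = [\bar H : \bar H \cap x^{-1}\bar H x].
\]
Since $bG$ is compact, Peter--Weyl exhibits it as an inverse limit of compact Lie groups, so the reduction of Proposition \ref{prop same index} (via Lemma \ref{reduction to Lie case} and Proposition \ref{lem index cpt lie}) produces this identity. The only caveat is that $bG$ need not be Polish; this is handled, as anticipated by the remark preceding Lemma \ref{lem index}, by passing to a compact metrizable quotient of $bG$ chosen to separate the countably many elements needed to witness the coset structure, and applying the Polish-case results there.

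Given this identity, conjugation by $x$ induces a bijection $(\bar H \cap x^{-1}\bar H x)/\bar{K'} \to (\bar H \cap x\bar H x^{-1})/\bar K$, whence $[\bar H \cap x\bar H x^{-1} : \bar K] = [\bar H \cap x^{-1}\bar H x : \bar{K'}]$. The tower law for cardinalities of coset spaces along $\bar K \le \bar H \cap x\bar H x^{-1} \le \bar H$ and $\bar{K'} \le \bar H \cap x^{-1}\bar H x \le \bar H$ then yields $[\bar H : \bar K] = [\bar H : \bar{K'}]$ (equal if finite, jointly infinite otherwise). Lemma \ref{lem index} in reverse converts this to $[H:K]=[H:K']$, verifying the index condition at $x$, and Ore's theorem supplies the desired common transversal. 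The main technical obstacle is precisely the descent to a compact metrizable quotient of $bG$ in the second paragraph: one must ensure the chosen quotient faithfully records the coset structure of $\bar H$ modulo $\bar H \cap x\bar H x^{-1}$ (including the possibility that this index is infinite), and this is exactly where the ``countably many elements at a time'' observation is essential.
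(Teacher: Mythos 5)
Your proof is correct and follows essentially the same route as the paper: reduce to Ore's index condition, pass to a compact metrizable quotient of $bG$ in which $H$ remains relatively closed, and combine Lemma~\ref{lem index} with Proposition~\ref{prop same index} applied to $\bar H$ inside that compact Polish group. Your extra tower-law step, which bridges $[\bar H : \overline{H\cap xHx^{-1}}]$ and $[\bar H:\bar H\cap x\bar Hx^{-1}]$ via the conjugation bijection on the intermediate quotients, is a welcome refinement: the paper's own proof silently identifies these two indices, whereas your argument shows the conclusion persists even if $\overline{H\cap xHx^{-1}}$ is a proper subgroup of $\bar H\cap x\bar Hx^{-1}$.
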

\begin{proof} Without loss of generality, we may assume that $G$ is ${\rm MAP}$. Indeed, we may pass to the largest Bohr quotient of $G$, since any Bohr closed subgroup is pulled back from that quotient. 
Since $G$ is countable, thanks to its MAP property and the Bohr closedness of $H$, we may find a compact Polish group $\widehat{G}$ and a dense embedding $j\colon G\to \widehat{G}$ such that $\overline{H}\cap G=H$, where the closure is taken inside $\widehat{G}$. 

% For convenience, we restrict to the case $G$ metrizable.

Now, for $x \in G$, it follows that $H \cap xHx^{-1}$ and $H \cap x^{-1}Hx$ are also Bohr closed. By Lemma \ref{lem index} the index of the inclusion of $H \cap xHx^{-1}$ in $H$ does not change after taking the closure if it is finite and it is $\omega$ if and only if it is $2^{\omega}$ after taking the closure since the closure is taken inside the metrizable compact group $\widehat{G}$. Now, Proposition \ref{prop same index} implies that the crucial equality $[H:H \cap xHx^{-1}] = [H:H \cap x^{-1}Hx]$ is always satisfied. Hence, there exists a common transversal for $H$ in $G$ using the decomposition $G = \sqcup_a HaH$ and the identifications $H \backslash HaH =  H/(H \cap aHa^{-1})$ and $HaH /H = (H \cap a^{-1}Ha) \backslash H$ as before.
\end{proof}

\begin{remark} In the arguments above, it was sufficient to consider metrizable quotients of $bG$. Let us emphasize that not all problems can be reduced to metrizable quotients so easily.
Note that if $G$ is {\rm MAP}, then $G$ is embedded as a dense subgroup in $bG$. This means there is always a net of elements in $G$ converging to the identity in $bG$. It is a surprisingly subtle question to decide, when there exists a {\it sequence} of non-trivial elements in $G$ that converges to the identity in $bG$, see \cite{MR3043070}.
\end{remark}

We end this section with a few examples and remarks.

\begin{example} \label{discrete}
Consider the Baumslag-Solitar group ${\rm BS}(1,2) = \langle a,b \mid bab^{-1}=a^2 \rangle$. It is well-known that ${\rm BS}(1,2)= \mathbb Z[\frac12] \rtimes \langle b \rangle$, where the element $b$ acts on $(\mathbb Z[\frac12],+)$ by multiplication with $2.$ Here, the element $a$ generates the standard copy of $\mathbb Z$ in $\mathbb Z[\frac12].$ Now, it is easy to see that $b \mathbb Z b^{-1} = 2 \mathbb Z \leq \mathbb Z[\frac12]$ and $b^{-1} \mathbb Z b = \frac12 \mathbb Z \leq \mathbb Z[\frac12].$ In particular, we get
$[\mathbb Z : \mathbb Z \cap b \mathbb Z b^{-1}]=2$ whereas $[\mathbb Z : \mathbb Z \cap b^{-1} \mathbb Z b]=1$. This implies that the number of left cosets and the number of right cosets contained in the double coset $\mathbb Zb\mathbb Z$ do not agree -- and hence there cannot be a common transversal for $\mathbb Z$ in ${\rm BS}(1,2)$. 

In view of the previous theorem, this is compatible with the fact that even though ${\rm BS}(1,2)$ is residually finite (and hence {\rm MAP}), see for example \cite[Lemma 2.4]{MR285589}, the subgroup $\mathbb Z$ is well-known not to be closed in the pro-finite topology. Indeed, $\mathbb Z$ is pro-finitely dense in $\mathbb Z[\frac12].$ On the other side, $\mathbb Z$ {\it is} Bohr closed as a subgroup of $\mathbb Z[\frac12]$, since $\mathbb Z[\frac12]/\mathbb Z$ is a subgroup of $S^1$ in a natural way. Now, the theorem above shows as a corollary that it is not Bohr closed in ${\rm BS}(1,2)$. This can also be shown by direct analysis of the finite-dimensional unitary representations of ${\rm BS}(1,2).$
\end{example}

\begin{example} \label{compact}
Consider the 2-solenoid ${\rm S}_2$, this can be defined as the Pontryagin dual of the discrete group $\mathbb Z[\frac12]$, and its subgroup $\mathbb Z_2$, the Pontryagin dual of $\mathbb Z[\frac12]/\mathbb Z$. Note that $\mathbb Z_2$ is just the group of 2-adic integers. Now, there is a crossed product ${\rm S}_2 \rtimes \mathbb Z$, where the generator of $\mathbb Z$ acts by multiplication with $2$. Indeed, this action can be defined on $\mathbb Z[\frac12]$ and thus has a continuous extension to its Pontryagin dual. This crossed product is a Polish group, whose connected component of the identity is just ${\rm S}_2$; in fact it is locally compact. Now, something similar happens as for the Baumslag-Solitar group, see Example \ref{discrete}. Indeed, $\mathbb Z_2 \cap b\mathbb Z_2b^{-1}$ has index $2$ in $\mathbb Z_2$ and  $\mathbb Z_2 \cap b^{-1}\mathbb Z_2b = \mathbb Z_2$, so again, the numbers of left and right cosets of $\mathbb Z_2$ in the double coset $\mathbb Z_2b\mathbb Z_2$ do not agree. Hence, there is no common transversal. And this happens even though $\mathbb Z_2$ is compact.

In view of our main result, we note that even though ${\rm S}_2$ is compact and an inverse limit of compact Lie groups, neither is the locally compact group ${\rm S}_2 \rtimes \mathbb Z$ an inverse limit of Lie groups nor is $\mathbb Z_2$ a compact Lie group. Whence, our main result does not apply.
\end{example}

\begin{remark}
According to the results in \cite{MR0442099AppelgateOnishi1977}, there exist closed common transversals for inclusions of pro-finite groups. However, note that this is not possible for general compact groups. Consider $S^1 \subset {\rm SU}(2)$ with homogeneous space ${\rm SU}(2)/S^1 = S^2.$ A closed common transversal would be homeomorphic to $S^2$ and hence ${\rm SU}(2)$ homeomorphic to $S^1 \times S^2$, which is not the case. Hence, there cannot be a closed common transversal - it is not hard to see that a locally closed transversal exists in this case. It remains an intriguing open problem to decide if in the context of Theorem \ref{thm main} a common transversal can be found with a specific rank in the Borel hierarchy; maybe even as a locally closed subset.
\end{remark}

\appendix

\section{The space of closed subsets of a Polish space}
In this appendix we summarize results on the hyperspace of closed (compact) subsets of a Polish space. Details an be found  e.g in  \cite{kechrisMR1321597,srivastavaMR1619545}. 
\subsection{The Effros Borel space $\mathcal{F}(X)$}
\begin{definition}
Let $X$ be a Polish space and $\mathcal{F}(X)$ the space of all closed subsets of $X$. The Effros Borel structure on $\mathcal{F}(X)$ is the $\sigma$-algebra generated by sets of the form 
\[\{F\in \mathcal{F}(X)\mid F\cap U\neq \emptyset\}\]
for an open set $U\subset X$. It is known that $\mathcal{F}(X)$ with the Effros Borel structure is a standard Borel space. 
We write $\mathcal{F}^*(X)=\mathcal{F}(X)\setminus \{\emptyset\}$ with the relative Borel structure.
\end{definition}
The proof of the next theorem can be found e.g., in \cite[Theorem 12.13]{kechrisMR1321597}. 
\begin{theorem}[Kuratowski--Ryll-Nardzewski]\label{thm Borel selector}
Let $X$ be a Polish space. There exists a sequence of Borel maps $\sigma_n\colon \mathcal{F}^*(X)\to X\,(n=1,2,\dots)$ such that $\{\sigma_n(F)\}_{n=1}^{\infty}$ is dense in $F$ for every $F\in \mathcal{F}^*(X)$.  
\end{theorem}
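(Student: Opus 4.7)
The plan is to prove the single-selection form of the statement (existence of one Borel map $\sigma\colon\mathcal{F}^*(X)\to X$ with $\sigma(F)\in F$) and then upgrade it to a dense sequence by applying the single-selection result on the open subspaces coming from a countable basis of $X$.

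For the single selection, I would fix a compatible complete metric $d$ with $d\le 1$ and a countable dense set $\{y_k\}_{k\ge 1}\subset X$, and for each $F\in\mathcal{F}^*(X)$ recursively pick indices $k_n(F)$ producing a sequence of nested open balls $B_n(F):=B(y_{k_n(F)},r_n)$ with $r_n=2^{-n}$ such that $F\cap B_n(F)\neq\emptyset$ and $\overline{B_{n+1}(F)}\subset B_n(F)$. Concretely, I would take $k_n(F)$ to be the least $k$ with $d(y_k,y_{k_{n-1}(F)})+r_n<r_{n-1}$ and $F\cap B(y_k,r_n)\neq\emptyset$, shrinking the radii geometrically if needed to guarantee existence (given $F\cap B_{n-1}(F)\neq\emptyset$ and the density of $\{y_k\}$). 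By induction on $n$, each $k_n$ is Borel, since the only input about $F$ is the family of sets $\{F:F\cap B(y_k,r_n)\neq\emptyset\}$, which are generators of the Effros Borel structure, and a least-index rule preserves Borel measurability. Completeness of $d$ then forces $\bigcap_n \overline{B_n(F)}$ to be a singleton $\{\sigma(F)\}$, with $\sigma(F)\in F$ because $F$ is closed and meets each $B_n(F)$; and $\sigma$ is Borel as the pointwise limit of the countably-valued Borel maps $F\mapsto y_{k_n(F)}$.

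To pass from a single selection to density, fix a countable basis $\{U_i\}_{i\ge 1}$ of $X$; each $U_i$ is Polish in the subspace topology (open subsets of Polish spaces are Polish), so the previous step applied to $U_i$ yields a Borel $\sigma^{(i)}\colon\mathcal{F}^*(U_i)\to U_i$. The assignment $F\mapsto F\cap U_i$ from $\{F:F\cap U_i\neq\emptyset\}$ into $\mathcal{F}^*(U_i)$ is Borel, because for any relatively open $V\subset U_i$ (also open in $X$) the preimage $\{F:F\cap V\neq\emptyset\}$ is a generator of the Effros Borel structure on $\mathcal{F}^*(X)$. Define
\[
\sigma_i(F):=\begin{cases}\sigma^{(i)}(F\cap U_i), & F\cap U_i\neq\emptyset,\\ \sigma(F), & \text{otherwise},\end{cases}
\]
so each $\sigma_i$ is Borel with $\sigma_i(F)\in F$. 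For any nonempty open $V\subset X$ intersecting $F$, some basic $U_i\subset V$ also meets $F$, whence $\sigma_i(F)\in U_i\subset V$. Thus $(\sigma_i(F))_i$ is dense in $F$ for every $F\in\mathcal{F}^*(X)$, and reindexing yields the required sequence $\{\sigma_n\}$.

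I expect the main obstacle to be the radius-and-index bookkeeping in the first step: one has to arrange the least-index recursion so that the geometric conditions (nestedness of closed balls, nonempty intersection with $F$ at each level) are simultaneously satisfiable and so that the recursion remains Borel measurable in $F$ at every stage. Once the single Borel selection exists, the density upgrade is essentially formal and relies only on the elementary fact that $F\mapsto F\cap U$ is Borel for every open $U$, which is immediate from the defining generators of the Effros Borel structure.
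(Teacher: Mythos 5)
The paper does not actually prove this statement; it is quoted as a known result with a pointer to \cite[Theorem 12.13]{kechrisMR1321597}, so your proposal has to be judged on its own. The second half of your argument (upgrading one Borel selector to a dense sequence by running the selector on $F\cap U_i$ for a countable basis $\{U_i\}$) is correct and is essentially the standard route: $U_i$ is Polish, $F\mapsto F\cap U_i$ is Borel because the preimage of the generator $\{E : E\cap V\neq\emptyset\}$ is the generator $\{F : F\cap V\neq\emptyset\}$, and the case split on $\{F : F\cap U_i\neq\emptyset\}$ is Borel. No complaints there.

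The gap is in the single-selector recursion, exactly where you suspected, and it is not just bookkeeping: with a \emph{fixed} radius schedule $r_n$ and the strict-nesting requirement $d(y_k,y_{k_{n-1}})+r_n<r_{n-1}$, an admissible index $k$ need not exist. If $z\in F\cap B(y_{k_{n-1}},r_{n-1})$ is the only witness and $d(z,y_{k_{n-1}})=r_{n-1}-\delta$ with $\delta$ small, then any $y_k$ with $F\cap B(y_k,r_n)\neq\emptyset$ satisfies $d(y_k,y_{k_{n-1}})>r_{n-1}-\delta-r_n$, so the nesting inequality forces $\delta>r_n$ --- a constraint on $F$ that no predetermined geometric shrinking can guarantee (in a general Polish metric space two balls whose centers are closer than the sum of the radii need not even intersect, so you cannot interpolate). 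There are two standard repairs. (a) Drop nesting entirely: define $k_n(F)$ as the least $k$ with $F\cap B(y_{k_1},r_1)\cap\cdots\cap B(y_{k_{n-1}},r_{n-1})\cap B(y_k,r_n)\neq\emptyset$; such $k$ always exists (take $y_k$ close to any point $z$ of the previous nonempty intersection), the conditions are still of generator form (hence Borel), and the closures $\overline{F\cap B(y_{k_1},r_1)\cap\cdots\cap B(y_{k_n},r_n)}$ form a decreasing sequence of nonempty closed sets of diameter $\le 2r_n\to 0$ inside the closed set $F$, so completeness gives a unique point $\sigma(F)\in F$. This is the argument in Kechris. (b) Keep nesting but make the radius part of the least-index search, i.e.\ minimize over pairs $(m,k)$ with $2^{-m}\le 2^{-n}$ subject to $d(y_k,y_{k_{n-1}})+2^{-m}<\rho_{n-1}(F)$ and $F\cap B(y_k,2^{-m})\neq\emptyset$; this always succeeds and remains Borel, but the radius then depends on $F$. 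Either fix rescues your proof; as written, the recursion can stall.
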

\subsection{Space of compact subsets $\mathcal{K}(X)$ and the Vietoris topology}
Let $X$ be a topological space, $\mathcal{K}(X)$ be the space of all compact subsets of $X$ with the Vietoris topology, i.e., the one generated by the sets of the form
\[\{K\in \mathcal{K}(X)\mid K\subset U\},\,\{K\in \mathcal{K}(X)\mid K\cap U\neq \emptyset\}\]
for $U$ open in $K$. A basis for the Vietoris topology then consists of the sets 
\[\{K\in \mathcal{K}(X)\mid K\subset U_0,\, K\cap U_i\neq \emptyset \,(i=1,\dots,n)\}\]
for $U_0,\dots,U_n$ open in $X$. 
If $X$ is metrizable with a compatible metric $d$, then the Hausdorff metric $\delta$ on $\mathcal{K}(X)$ with respect to $d$ is defined by 
\[\delta(K,L)=\begin{cases}\,\,0 & (K=L=\emptyset)\\ 
\,\,1 & (\text{exactly one of }K,L\text{ is }\emptyset,\\
\max\{\max_{x\in K}d(x,L),\max_{y\in L}d(y,K)\} & (K\neq \emptyset \neq L).
\end{cases}
\]
In this case, it can be shown that the Vietoris topology coincides with the metric topology given by the Hausdorff metric $\delta$ \cite[Proposition 2.4.14]{srivastavaMR1619545}. 
If $X$ is a compact metrizable space, then so is $\mathcal{K}(X)$ \cite[Proposition 2.4.17]{srivastavaMR1619545}. 
Since $\{\emptyset\}$ is isolated in $\mathcal{K}(X)$ (cf. \cite[Exercise 4.20]{kechrisMR1321597}), in this case the set $\mathcal{K}^*(X)=\{K\in \mathcal{K}(X)\mid K\neq \emptyset\}$ is also compact. 
\subsection{Wijsman topology on $\mathcal{F}^*(X)$}
Now if $X$ is a (not necessarily compact) Polish space with a complete compatible metric $d$, a Wijsman topology on $\mathcal{F}^*(X)$ is the weak topology generated by the maps $\mathcal{F}^*(X)\ni F\mapsto d(x,F)\in \R,\,x\in X$. The Wijsman topology is a Polish topology compatible with the Effros Borel structure \cite[$\S$4]{beerMR1065940}.
\subsection{$\mathcal{K}^*(X)$ as a Borel subspace of $\mathcal{F}^*(X)$}
We will use the fact that $\mathcal{K}^*(X)$ is a Borel subspace of $\mathcal{F}^*$ for the proof of Theorem \ref{thm main}. 
Since we were unable to find a proper reference, we record the proof here. 

\begin{lemma}\label{lem Wijsman=Vietoris}
Let $(X,d)$ be a Polish metric space. Then the inclusion map $j\colon \mathcal{K}^*(X)\to \mathcal{F}^*(X)$ is Vietoris--Wijsman continuous. In particular, the restriction of the Effros Borel structure of $\mathcal{F}^*(X)$ to $\mathcal{K}^*(X)$ agrees with the Borel structure induced by the Vietoris topology. If in addition $X$ is compact, then on $\mathcal{K}^*(X)=\mathcal{F}^*(X)$ the Vietoris topology and the Wijsman topology agree.  
\end{lemma}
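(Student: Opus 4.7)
The plan is to argue in three steps, matching the three assertions of the lemma.

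\textbf{Step 1: Continuity of $j$.} Since the Wijsman topology on $\mathcal{F}^*(X)$ is generated by the evaluation maps $F \mapsto d(x,F)$ for $x \in X$, it suffices to check that for each $x \in X$ the map $\mathcal{K}^*(X) \ni K \mapsto d(x,K) \in \mathbb{R}$ is Vietoris continuous. I would prove upper and lower semicontinuity separately. For the upper bound, if $d(x,K_0) < c$, pick $y_0 \in K_0$ with $d(x,y_0) = d(x,K_0)$ (attained by compactness) and observe that the Vietoris-open set $\{K : K \cap B(x,c) \neq \emptyset\}$ contains $K_0$; any such $K$ satisfies $d(x,K) < c$. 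For the lower bound, if $d(x,K_0) > c$, set $V := X \setminus \overline{B}(x,c)$, which is open; then $K_0 \subset V$, the set $\{K : K \subset V\}$ is Vietoris open, and for any compact $K \subset V$ the distance $d(x,K)$ is attained and is strictly greater than $c$.

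\textbf{Step 2: Agreement of Borel structures.} Since $X$ is Polish, $\mathcal{K}^*(X)$ equipped with the Vietoris topology is Polish (and hence standard Borel), while $(\mathcal{F}^*(X), \text{Effros Borel})$ is standard Borel by the discussion preceding the lemma. The continuity proved in Step 1 implies that $j$ is Borel, so the restriction of the Effros Borel structure to $\mathcal{K}^*(X)$ is contained in the Vietoris Borel structure. For the reverse inclusion I would invoke the Lusin--Souslin theorem (see \cite[Theorem 15.1]{kechrisMR1321597}): a Borel injection between standard Borel spaces has Borel image and is a Borel isomorphism onto that image. Applied to $j$, this shows simultaneously that $\mathcal{K}^*(X)$ is a Borel subset of $\mathcal{F}^*(X)$ and that the Vietoris Borel structure agrees with the restriction of the Effros Borel structure.

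\textbf{Step 3: The compact case.} If $X$ is compact, every closed subset is compact, so $\mathcal{K}^*(X) = \mathcal{F}^*(X)$ as sets, and the Vietoris topology on this common space is compact Hausdorff (it is metrized by the Hausdorff metric, as recalled in the appendix). The Wijsman topology is Hausdorff, because distinct nonempty closed sets $F \neq F'$ are separated by some $d(x,\cdot)$: if $x \in F \setminus F'$, then $d(x,F) = 0 < d(x,F')$. By Step 1 the identity map is continuous from the compact Hausdorff Vietoris topology to the Hausdorff Wijsman topology; a continuous bijection from a compact space onto a Hausdorff space is a homeomorphism, and hence the two topologies coincide.

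The main obstacle is the middle step, where the equality of Borel structures does not follow from continuity alone and genuinely requires the Lusin--Souslin theorem; Steps 1 and 3 are direct topological verifications.
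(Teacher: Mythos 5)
Your proposal is correct and follows essentially the same route as the paper's proof: continuity of $j$, then the Lusin--Souslin theorem to upgrade the Borel injection to a Borel isomorphism onto its image (the paper invokes this implicitly where you make it explicit), and finally the compact-to-Hausdorff continuous bijection argument. The only difference is in Step 1, where you verify upper and lower semicontinuity of $K\mapsto d(x,K)$ directly against Vietoris-basic open sets, while the paper proves sequential continuity with respect to the Hausdorff metric by contradiction; both are valid, and your version is arguably the more transparent of the two.
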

\begin{proof}
We show that the identity map ${\rm id}\colon \mathcal{K}^*(X)\to \mathcal{F}^*(X)$ is sequentially continuous. 
Suppose $(K_n)_{n=1}^{\infty}$ is a sequence in $\mathcal{K}^*(X)$ which $\delta$-converges to some $K\in \mathcal{K}^*(X)$. 
Let $x\in X$. 
Assume by contradiction that $(d(x,K_n))_{n=1}^{\infty}$ does not converge to $d(x,K)$. Then there exists $\varepsilon>0$ such that at least one of the sets $I_+=\{n\in \N\mid d(x,K_n)\ge d(x,K)+\varepsilon\}$ or $I_-=\{n\in \N\mid d(x,K)\ge d(x,K_n)+\varepsilon\}$ is infinite. 
Assume that $I_+$ is infinite. By passing to a subsequence we may assume that $d(x,K_n)\ge d(x,K)+\varepsilon$ for all $n$. 
Let $y\in K$. Then $d(y,K_n)\le \max_{y'\in K}d(y',K_n)\le \delta(K,K_n)\xrightarrow{n\to \infty}0$. By compactness, for each $n\in \N$, there exists $y_n\in K_n$ such that $d(y,K_n)=d(y,y_n)$. 
Then 
\eqa{
d(y,y_n)&\ge d(x,y_n)-d(x,y)\\
&\ge d(x,K_n)-d(x,y)\\
&\ge d(x,K)+\varepsilon-d(x,y).
}
Let $n\to \infty$. Then we obtain 
\[d(x,y)\ge d(x,K)+\varepsilon.\]
Since $y\in K$ is arbitrary, it implies that 
\[d(x,K)\ge d(x,K)+\varepsilon,\]
which is a contradiction. 
Similarly, it is impossible that $I_-$ is infinite, whence $d(x,K_n)\xrightarrow{n\to \infty}d(x,K)$. Since $x$ is arbitrary, this shows that $K_n\xrightarrow{n\to \infty}K$ in $\mathcal{F}^*(X)$. Therefore, the inclusion map $j\colon \mathcal{K}^*(X)\to \mathcal{F}^*(X)$ is continuous. In particular, $j$ is an injective Borel map. Therefore it defines a Borel isomorphism of $\mathcal{K}^*(G)$ onto its image. If moreover $X$ is compact, then $j$ is a bijective continuous map from a compact space $\mathcal{K}^*(X)$ to a Hausdorff space $\mathcal{F}^*(X)$, whence it is a homeomorphism.   
\end{proof}

\section*{Acknowledgments}
A.T.\ thanks Henry Bradford for interesting discussions on the subject and Arno Fehm for the reference to \cite[Lemma 1.2.7]{MR2445111}.
H.A.\ is supported by Japan Society for the Promotion of Sciences KAKENHI 20K03647. 
\bibliography{references} 
\end{document}